\numberwithin{equation}{section}
\newcommand\frg{\mathfrak{g}}
\newcommand\frh{\mathfrak{h}}
\newcommand\mm{\mathcal{M}}
\newcommand\home{\operatorname{Hom}}
\newcommand\pone{\Bbb{P}^1}
\newcommand\neww{\operatorname{Anc}}
\newcommand\Id{1}
\newcommand\mv{\mathcal{V}}
\newcommand\tensor{\otimes}
\newcommand\mh{\mathcal{H}}
\newcommand\mc{\mathcal{C}}
\newcommand\Res{\operatorname{Res}}
\newcommand{\leto}[1]{\stackrel{#1}{\to}}
\newtheorem{theorem}{Theorem}[section]
\newtheorem{remark}[theorem]{ Remark}
\newtheorem{proposition}[theorem]{Proposition}
\newtheorem{lemma}[theorem]{Lemma}
\newtheorem{definition}[theorem]{Definition}
\newtheorem{defi}[theorem]{Definition}
\begin{document}
\title[Unitarizability of the KZ connection]{Unitarity of the KZ/Hitchin connection on conformal blocks in genus $0$ for arbitrary Lie algebras}

\author{Prakash Belkale}
\thanks{Partially supported by NSF grant  DMS-0901249.}

\address{Department of Mathematics\\ UNC-Chapel Hill\\ CB \#3250, Phillips Hall
\\ Chapel Hill, NC 27599}
\email{belkale@email.unc.edu}
\begin{abstract} We prove that the vector bundles of conformal blocks, on suitable moduli spaces of genus zero curves with marked points, for arbitrary simple Lie algebras and arbitrary integral levels, carry unitary metrics of geometric origin  which  are preserved by the Knizhnik-Zamolodchikov/Hitchin connection (as conjectured in ~\cite{FGK}). Our proof builds upon the work of Ramadas ~\cite{TRR} who proved this unitarity statement in the case of the Lie algebra $\mathfrak{sl}_2$ (and genus $0$).
\end{abstract}

\maketitle

\section{Introduction}
Consider  a finite dimensional simple Lie algebra $\frg$, a non-negative integer $k$ called the level and a $N$-tuple $\vec{\lambda}=(\lambda_1,\dots,\lambda_N)$ of
dominant weights of $\frg$ of level $k$. The mathematical theory of Tsuchiya-Kanie ~\cite{TK} and Tsuchiya-Ueno-Yamada ~\cite{TUY}, associates to this data a vector bundle $\mathcal{V}=\mathcal{V}_{\vec{\lambda},k}$ on $\overline{\mathfrak{M}}_{g,N}$, the moduli
stack of stable $N$-pointed curves of genus $g$.

On the open part $\mathfrak{M}_{g,N}$ of smooth pointed curves, $\mv$ carries a flat projective connection $\nabla$, which  is the restriction of a suitable Knizhnik-Zamolodchikov (KZ) connection when $g=0$. The WZW connection ~\cite{TUY} generalizes the KZ connection to all genera.

The fibers of $\mv$ on $\mathfrak{M}_{g,N}$  can also be described in terms of sections of natural line bundles on suitable moduli stacks of parabolic principal bundles on $N$-pointed curves of genus $g$. These
sections generalize classical theta functions, and are hence called non-abelian or generalized theta functions (see the survey ~\cite{sorger}).
The connection on $\mv$ was described from the above algebro-geometric point of view by Hitchin.

A basic conjecture in the subject, with origins in physics, is that $\mv$ carries  a projective unitary metric  which is preserved (projectively) by the connection $\nabla$. This conjecture has been proved for all genera including genus $0$, by the combined work of Kirillov and Wenzl via topological field theory and the theory of quantum groups  ~\cite{KL,Ki1,Ki2,W, Ki3}. The construction of this metric, which is expected to be unique after some additional conditions are imposed, is however not explicit.

In the 90's Gawedzki  and collaborators ~\cite{Gawedzki1,FGK} proposed a conjectural construction of the unitary metric via integration of the  Schechtman-Varchenko forms ~\cite{SV} (see ~\cite{SV,BF,MTV,V} and the references therein for various applications of these beautiful forms).  Recently the case $\frg=\mathfrak{sl}_2$ and genus $0$ of Gawedzki's proposal was rigorously proved by Ramadas ~\cite{TRR}. In this paper, following Ramadas' general strategy, we prove the (geometric) unitarity conjecture for arbitrary simple Lie algebras $\frg$ in genus $0$. As in Ramadas' work, the unitary metric is obtained by realizing
the bundle of conformal blocks inside a Gauss-Manin system of cohomology of  smooth projective  varieties.
\begin{theorem}\label{introtheorem}
The KZ/Hitchin connection on  bundles of conformal blocks over configuration spaces of distinct points on $\Bbb{A}^1$ is unitary, with the unitary metric of geometric origin, for any  simple Lie algebra $\frg$ and any integral level $k$.
\end{theorem}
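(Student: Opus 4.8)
The plan is to follow the strategy of Ramadas \cite{TRR}: realize the bundle of conformal blocks $\mathcal{V}_{\vec\lambda,k}$, together with its KZ connection, as a flat sub-bundle of a variation of polarized Hodge structure attached to the cohomology of a family of smooth projective varieties, and then obtain the unitary metric by restricting the Hodge-theoretic polarization. Since in genus $0$ the Hitchin/WZW connection agrees projectively with the KZ connection, it suffices to treat KZ, and since the claimed metric is ``of geometric origin'' the point is precisely to produce it from intersection theory on a geometric model rather than from the quantum-group construction.

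\emph{Step 1 (hypergeometric realization).} Using the Wakimoto (free field) realization of $\widehat{\mathfrak g}_k$, the Schechtman--Varchenko construction identifies the space of coinvariants of a tensor product of Verma modules $M_{\lambda_1}\otimes\cdots\otimes M_{\lambda_N}$ with the top twisted de Rham cohomology $H^{\mathrm{top}}(U_{\vec z},\mathcal{L}_\kappa)$ of the complement $U_{\vec z}$ of a discriminantal hyperplane arrangement, where $\mathcal{L}_\kappa$ is the rank-one local system of the master function $\Phi=\prod(t_a-t_b)^{(\alpha_{i(a)},\alpha_{i(b)})/\kappa}\prod_{a,l}(t_a-z_l)^{-(\lambda_l,\alpha_{i(a)})/\kappa}\prod_{l<m}(z_l-z_m)^{(\lambda_l,\lambda_m)/\kappa}$ with $\kappa=k+h^\vee$. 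Under this identification the KZ connection in the variables $z_l$ becomes the Gauss--Manin connection of the family over the configuration space of distinct ordered points in $\mathbb{A}^1$, and the natural Hermitian pairing on twisted cohomology --- the period/intersection pairing of twisted cycles, i.e.\ the ``integral of Schechtman--Varchenko forms'' of \cite{FGK} --- is the candidate metric.

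\emph{Step 2 (integral level).} When $k\in\mathbb{Z}_{\ge 0}$ the number $\kappa$ is rational, so the exponents of $\Phi$ are rational and $\mathcal{L}_\kappa$ is a direct summand of $\pi_*\mathbb{C}$ for a finite abelian Kummer cover $\pi\colon Y_{\vec z}\to\overline{U}_{\vec z}$ of a smooth compactification; resolving singularities one obtains a smooth projective $\widetilde Y_{\vec z}$ whose cohomology carries a pure polarized Hodge structure on which the Galois group acts, with the conformal-block fibre landing in a fixed isotypic component. Simultaneously, at integral level the Verma modules must be replaced by their integrable irreducible quotients $L_{\lambda_l}$; on the cohomology side this is effected by a further restriction cutting out the vanishing of the relevant singular-vector (iterated screening) operators --- a BGG-type resolution translating into a sub-local-system --- which one checks is horizontal for Gauss--Manin. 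The upshot is that $\mathcal{V}_{\vec\lambda,k}$ with its KZ connection is identified with a flat sub-bundle of the Hodge bundle of the family $\widetilde Y_{\vec z}$, and flatness of the polarization for Gauss--Manin yields a $\nabla$-flat Hermitian form on $\mathcal{V}_{\vec\lambda,k}$ of genuinely geometric origin.

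\emph{Step 3 (positivity --- the main obstacle).} It remains to prove that this flat Hermitian form is definite, and I expect this to be the crux and the place where genuinely new work beyond the $\mathfrak{sl}_2$ case is required. The strategy is: (a) determine the Hodge bidegree occupied by the conformal blocks inside the twisted cohomology, by analysing the order of poles and logarithmic behaviour of the Schechtman--Varchenko top forms along the boundary divisor and combining this with the Euler-characteristic/dimension count for discriminantal arrangements --- one expects the blocks to concentrate in a single component $H^{p,q}$, on which the polarization has a fixed sign governed by $i^{p-q}$; and (b) pin down that sign, hence positivity, either by degenerating to a factorized configuration of the $z_l$ and using the sewing/factorization axiom to reduce to a three-point block where the pairing is computed directly, or by matching the geometric pairing with the contravariant (Shapovalov) Hermitian form on conformal blocks, which is positive definite because the integrable highest-weight representations of $\widehat{\mathfrak g}$ are unitary. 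The difficulty absent for $\mathfrak{sl}_2$ is that for general $\mathfrak g$ the master function involves the full Cartan matrix and one species of integration variable per simple root, so both the Hodge-filtration computation and the factorization/comparison argument must rest on structural properties of the Schechtman--Varchenko complex (its purity, the identification of its cohomology with coinvariants, and functoriality under fusion) rather than on explicit one-variable calculus, and the resonant integral-level degeneration $M_{\lambda_l}\rightsquigarrow L_{\lambda_l}$ must be handled uniformly in $\mathfrak g$.
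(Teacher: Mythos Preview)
Your overall architecture matches Ramadas' strategy and the paper's: realize conformal blocks as a flat sub-bundle of a Gauss--Manin system on a finite cover, then restrict the Hodge polarization. But you have misidentified where the difficulty lies.

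Positivity (your Step 3) is \emph{not} the obstacle. The paper shows that the image of conformal blocks lies in $H^0(\overline{Y}_z,\Omega^M)=H^{M,0}\subset H^M(\overline{Y}_z,\Bbb{C})$, the space of global holomorphic top-forms on the smooth projective compactification. On $H^{M,0}$ the Hodge form $\omega\mapsto(\sqrt{-1})^M\int\omega\wedge\bar\omega$ is manifestly positive definite, so no factorization argument, no Shapovalov comparison, and no separate sign analysis is needed. Your option (a), ``concentration in a single $H^{p,q}$'', is exactly what happens, with $(p,q)=(M,0)$; your option (b) is unnecessary.

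The real crux --- which you pass over as ``analysing the order of poles and logarithmic behaviour'' --- is the \emph{extension theorem}: proving that the Schechtman--Varchenko form $\mathcal{R}\Omega$ extends as a \emph{regular} holomorphic top-form to the compactification $\overline{Y}$. This is the bulk of the paper. Concretely one must show that the logarithmic degree of $\mathcal{R}\Omega$ is strictly positive along every abnormal stratum of the boundary (some $t_a$'s colliding with each other, with a $z_i$, or with $\infty$). The master function $\mathcal{R}$ contributes explicit fractional exponents, but bounding the pole order of the other factor $\Omega$ requires a new idea absent from your proposal: interpret $\Omega$ as a genuine correlation function in the conformal field theory (via propagation of vacua, inserting $f_{\beta(a)}(-1)|0\rangle$ at the extra points), and then expand it in power series near each stratum. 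The coefficients of these expansions are controlled by the weight structure of integrable highest-weight $\hat{\mathfrak{g}}$-modules --- specifically the Kac--Moody inequality (Kac, Theorem~12.5(d)) giving $\sum_a b_a\le\frac{2(\lambda,\gamma)-(\gamma,\gamma)}{2k}$. The delicate point is that this estimate has $k$ in the denominator while the master-function exponents have $\kappa=k+g^*$; the gap $g^*$ is exactly what makes the extension go through, combined with root-system combinatorics such as: for any positive root $\delta=\sum_i\delta_i$ written in simple roots, $\sum_i(\delta_i,\delta_i)<2g^*$, and $\sum_{i<j}(\delta_i,\delta_j)<0$.

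Your Step~2 proposal to pass from Verma to integrable modules by a ``BGG-type resolution translating into a sub-local-system'' is not the paper's route and is not obviously viable at the resonant integral level. The paper sidesteps this entirely: it starts directly from the conformal-block space $V^\dagger_{\vec\lambda}(\mathfrak{X})$ (already defined via integrable modules), maps it to differential forms via the correlation function, and then proves \emph{a posteriori} (by an explicit residue calculation) that this map agrees with the Schechtman--Varchenko map $\eta_{-\kappa}$ restricted to conformal blocks. Flatness then follows from \cite{SV}, injectivity from an elementary residue argument, and the flatness of the compactified map $\overline{T}$ from mixed Hodge theory as in \cite{TRR}. No screening-operator or singular-vector analysis enters.
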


An algebro-geometric strategy for carrying out  Gawedzki's proposal was given in ~\cite{TRR}. According to Ramadas, one should  first find derivatives of theta functions on Harder-Narasimhan strata, which we take  as a suggestion to look at correlation functions. The main steps in Ramadas' proof ~\cite{TRR} can be described as follows (here $g=0$ and  $\frg=\mathfrak{sl}_2$).
\begin{enumerate}
\item Fix a curve $C$ of genus $g$ with $N$ marked points $z_1,\dots, z_N$. The fiber $V$ of $\mv$ over this marked curve is isomorphic to the space of sections of a natural line bundle on a suitable moduli stack of parabolic principal bundles on $C$ (with parabolic structure  at the given marked points). Any global section of this
    line bundle necessarily vanishes on any Harder-Narasimhan stratum (which corresponds to  non-semistable parabolic bundles). Ramadas'
    first step is to consider a ``Harder-Narasimhan trace'', which is a suitable higher order derivative,  at such strata. These considerations lead him to a map from $V$ to
    the space of top degree differential forms on a affine variety $X$ that depends upon the marked curve (actually $X$ is an open subset of $C^M$ for a suitable $M$).
\item Ramadas proves (geometrically) a key vanishing statement  for such differential forms  on partial diagonals.
He then lifts these differential forms to a finite cover $Y$ of $X$, multiplies them by a ``master function'' and shows, using the vanishing statement, that the resulting differential forms extend to any smooth compactification $\overline{Y}$  of $Y$.
\item We hence obtain  an injective  map $V\to H^{\dim{Y}}(\overline{Y},\Bbb{C})$. Ramadas then proves that this map is flat for the connections as the marked curve varies (and where we consider the Gauss-Manin connection on the vector bundle formed by the spaces  $H^{\dim{Y}}(\overline{Y},\Bbb{C})$). To prove the above flatness assertion, it suffices to prove that the composite $V\to H^{\dim{Y}}(Y,\Bbb{C})$ is flat, which is  verified by an explicit calculation.
\item We are now in  an curious situation, with a flat map from $\mv$ to a Gauss-Manin system (of cohomology of smooth projective varieties)  whose image  is contained in the space of  top-degree algebraic forms. It follows that the canonical polarization on the Gauss-Manin system restricts to give the desired metric on $\mv$.
\end{enumerate}
\subsection{An enumeration of our main steps:}
We modify the first step in Ramadas' proof by working in the language of conformal blocks ~\cite{TK,TUY,Ueno}. The differential form
that we consider is a suitable correlation function. It plays a similar role as Ramadas' Harder-Narasimhan trace (I would like to acknowledge this conceptual starting point given by Ramadas). Roughly speaking, it corresponds to modifying the corresponding $G$-bundle (in the simplest Harder-Narasimhan stratum) around a finite set of additional points, and then taking a suitable mixed partial derivative of the theta functions in the direction of these changes (the underlying principal bundle is actually trivial in this stratum). The modifications are controlled by the choice of  simple positive roots of the Lie algebra for each of the additional points.

The correlation function is a differential form in the additional variables, with remarkable confluence properties as some of the additional points coalesce at the parabolic points, at a fixed point ($\infty$ for us) and at a moving point. The works  ~\cite{TK,TUY} give local expressions for these correlation functions, allowing a bridge to the  representation theory of Kac-Moody algebras (see Proposition ~\ref{richardthomas}).

The master function is the function discovered by Schechtman-Varchenko ~\cite{SV} (see Section ~\ref{cottard}). We prove the extension property of the differential form (the master function multiplied with the correlation function) to smooth compactifications by using the confluence properties alluded to in the previous paragraph. In particular, we prove more general forms of ``vanishing along partial diagonals''  by employing these techniques.

The crucial flatness assertion is proved by using results of Schechtman-Varchenko ~\cite{SV}. The new ingredient is an interpretation of the correlation function in terms of the Schechtman-Varchenko forms, see equation  ~\eqref{interpretation} (this uses ideas that I learned from ~\cite{ATY}).

Exactly as in Ramadas' paper, the last step (the fourth) involves applications of mixed Hodge theory.
\subsection{Acknowledgements}\label{papers}  This paper builds upon the beautiful recent work of T.R. Ramadas ~\cite{TRR}.  In addition, the papers ~\cite{loo,Var}  were helpful in our work (in particular, we use ideas from these papers).

I am deeply indebted to  Najmuddin Fakhruddin for an extensive  communication on the subject, his comments, clarifications and corrections, constant encouragement, and for interest in this work. I thank Madhav Nori for bringing Ramadas' work to my attention (in 2004), and for many conversations over the years on related questions. I thank A.  Varchenko for pointing out many  references (in particular, the works of Kirillov and Wenzl), and for his comments on an earlier version. I thank A. Kirillov, S. Kumar, E. Looijenga  and H. Wenzl for useful discussions and correspondence.

\section{Basic definitions in  the theory of conformal blocks}
We recall some definitions from Ueno's book ~\cite{Ueno}, which we will use as a reference for the theory of conformal blocks. Let $\frg$ be a simple Lie algebra over $\Bbb{C}$. Fix the data of a  Cartan decomposition of $\frg$:
$$\frg=\frh\oplus\sum_{\alpha\in\Delta}\frg_{\alpha}$$
where $\Delta\subseteq \frh^*$ the set of roots is decomposed into a union $\Delta_{+}\cup \Delta_{-}$ of positive and negative roots. The set of simple (positive) roots is denoted by $R$.

A Killing form $(\ , \ )$ on $\frg$ induces one on $\frh$ and $\frh^*$. Normalize the Killing form by requiring that  $(\theta,\theta) = 2$.
\subsection{Affine Lie algebras}
The affine Lie algebra $\hat{\frg}$ is defined to be
$$\hat{\frg}=\frg\tensor \Bbb{C}((\xi))\oplus \Bbb{C}c$$
where $c$ is an element in the center of $\hat{\frg}$ and the Lie algebra structure is defined by
$$[X\tensor f(\xi),Y\tensor g(\xi)]= [X,Y]\tensor f(\xi)g(\xi) + c(X,Y)\Res_{\xi=0}(gdf)$$
where $f,g\in \Bbb{C}((\xi))$ and $X,Y\in \frg$.

Introduce notation  for the Lie subalgebras
 $$\hat{\frg}_{+}=\frg\tensor \Bbb{C}[[\xi]]\xi,\ \hat{\frg}_{-}=\frg\tensor \Bbb{C}[\xi^{-1}]\xi^{-1}$$ so that we have a vector space decomposition
 $$\hat{\frg}=\hat{\frg}_{+}\oplus \frg\oplus \Bbb{C}c\oplus\hat{\frg}_{-}.$$
Let $$X(n)= X\tensor \xi^n, \ X=X(0)=X\tensor 1,\ X\in \frg, n\in \Bbb{Z}.$$

\subsection{Representation theory of affine Lie-algebras}
 Recall that finite dimensional irreducible representations of $\frg$ are parameterized by the set of dominant integral weights $P_+$  considered a subset of $\frh^*$. To $\lambda\in P_+$, the corresponding irreducible representation $V_{\lambda}$ contains a non-zero vector $v\in V_{\lambda}$ (the highest weight vector) such that
 $$Hv=\lambda(H)v, H\in \frh$$ $$X_{\alpha}v=0, X_{\alpha}\in \frg_{\alpha}, \forall \alpha\in \Delta_{+}.$$

 We will fix a level $k$ in the sequel. Let $P_k$ denote the set of dominant integral weights of level $k$. More precisely
 $$P_k=\{\lambda\in P_+\mid (\lambda,\theta)\leq k\}$$
where  $\theta$ is the highest (longest positive) root.

For each $\lambda\in P_k$ there is a unique irreducible representation $\mathcal{H}_{\lambda}$ of $\hat{\frg}$ which satisfies the following properties
\begin{enumerate}
\item $V_{\lambda}=\{|v\rangle\mid \hat{\frg}_+|v\rangle =0\}$.
\item The central element $c$ of $\hat{\frg}$ acts on $\mh_{\lambda}$ by multiplication by $k$.
\item Let $|\lambda\rangle$ denote a highest weight vector in $V_{\lambda}$ and $\theta$ the highest root, then
    $$X_{\theta}(-1)^{k-(\theta,\lambda)+1}|\lambda\rangle=0.$$
    where $X_{\theta}$ is a non-zero element in the root space $\frg_{\theta}$. In fact $\mh_{\lambda}$ is generated by $V_{\lambda}$ over $\hat{\frg}$ with the above fundamental relation.
\end{enumerate}
The representation $\mathcal{H}_{\lambda}$ when $\lambda=0$ (still at level $k$) is called the vacuum representation at level $k$.

\subsection{Conformal blocks}
We will work with conformal blocks on marked curves in $\mathfrak{M}_{0,N}$ (that is, smooth curves of genus $0$ with $N$ marked points). But we will state the definitions in greater generality.

To define conformal blocks  we will fix a stable
 $N$-pointed curve of genus $g$ with formal neighborhoods $\mathfrak{X}=(C; P_1,\dots,P_N,\eta_1,\dots,\eta_N)$. Here we require:
 \begin{enumerate}
 \item $C$ is smooth at the distinct points $P_1,\dots,P_N$.
\item $C-\{P_1,\dots,P_N\}$ is affine.
\item  A stability condition (finiteness of automorphisms of the pointed curve).
\item Isomorphisms $\eta_i: \hat{\mathcal{O}}_{C,P_i}\leto{\sim}
\Bbb{C}[[\xi_i]],\ i=1,\dots, N$.
\end{enumerate}
Let $\mathfrak{X}$ and be as above, and choose
$\vec{\lambda}=(\lambda_1,\dots,\lambda_N)\in P_k^N$. There are a
number of definitions relevant to the situation: Let
$$\hat{\frg}_N=\bigoplus_{i=1}^N \frg\tensor_{\Bbb{C}}
\Bbb{C}((\xi_i))\oplus \Bbb{C}c.$$ be the Lie algebra with $c$ a
central element and the Lie bracket given by
$$[\sum_{i=1}^N X_i\tensor f_i,\sum_{i=1}^N Y_i\tensor g_i]= \sum_{i=1}^N [X_i,Y_i]\tensor f_ig_i + c
\sum_{i=1}^N (X_i,Y_i) \operatorname{Res_{P_i}}(g_i df_i).$$

Let
\begin{equation}\label{sato}
\frg(\mathfrak{X})=\frg\tensor_{\Bbb{C}}\Gamma(C-\{P_1,\dots,P_N\},\mathcal{O})\hookrightarrow
\hat{\frg}_N.
\end{equation}
Let $\vec{\lambda}$ be as
above. Set
$$\mh_{\vec{\lambda}}=\mh_{\lambda_1}\tensor\dots\tensor\mh_{\lambda_N}.$$

For a given $X\in \frg$ and $f\in\Bbb{C}((\xi_i))$, define
$\rho_i(X\tensor f)$ an endomorphism of $\mh_{\vec{\lambda}}$ by
$$\rho_i(X\tensor f)| v_1\rangle\tensor\dots\tensor |v_N\rangle=|
v_1\rangle\tensor\dots \tensor(X\tensor f|v_i\rangle) \tensor\dots\tensor |v_N\rangle$$
where  $| v_i\rangle\in\mh_{\lambda_i}$ for each $i$.

We can now define the action of $\hat{\frg}_N$ on
$\mh_{\vec{\lambda}}$ by
$$(X_1\tensor f_1,\dots, X_N\tensor f_N)| v_1\rangle\tensor\dots\tensor
|v_N\rangle = \sum_{i=1}^N \rho_i(X_i\tensor f_i)| v_1\rangle\tensor\dots\tensor
|v_N\rangle.$$
\begin{defi}
Define the space of conformal blocks
$$V^{\dagger}_{\vec{\lambda}}(\mathfrak{X})=
\home_{\Bbb{C}}(\mh_{\vec{\lambda}}/\frg(\mathfrak{X})\mh_{\vec{\lambda}},\Bbb{C})$$
Define the set of dual conformal blocks,
$V_{\vec{\lambda}}(\mathfrak{X})=\mh_{\vec{\lambda}}/\frg(\mathfrak{X})\mh_{\vec{\lambda}}$.
These are both finite dimensional $\Bbb{C}$-vector spaces which can  defined on families (and commute with base change) ~\cite{Ueno}.
\end{defi}

Following Dirac's bra-ket conventions, elements of
$V^{\dagger}_{\vec{\lambda}}(\mathfrak{X})$ (or $\mh^*_{\vec{\lambda}}$)  are
frequently denoted by $\langle\Psi|$ and those of
$V_{\vec{\lambda}}(\mathfrak{X})$ (or of $\mh_{\vec{\lambda}}$) by
$|\Phi\rangle$ and the pairing by $\langle\Psi|\Phi\rangle$.

\begin{remark}
Let $\langle\Psi|\in V^{\dagger}_{\vec{\lambda}}$, $|\Phi\rangle\in \mh_{\vec{\lambda}}$, $X\in \frg$, and $f\in\Gamma(C-\{P_1,\dots,P_N\},\mathcal{O})$, then the following ``gauge condition'' holds (using the embedding ~\eqref{sato})
$$\langle\Psi|X\tensor f|\Phi\rangle=0.$$
\end{remark}
\subsection{Propagation of vacua}
Add a new point $P_{N+1}$ together with the vacuum representation
$V_0$ of level $k$, at $P_{N+1}$. Also fix a formal neighborhood at
$P_{N+1}$. We therefore have a new pointed curve $\mathfrak{X}'$,
and an extended
$\vec{\lambda}'=(\lambda_1,\dots,\lambda_N,\lambda_{N+1}=0)$. The
propagation of vacuum gives an  isomorphism
$$V^{\dagger}_{\vec{\lambda}'}(\mathfrak{X}')\leto{\sim}V^{\dagger}_{\vec{\lambda}}(\mathfrak{X}),
\ \langle\widehat{{\Psi}}|\mapsto \langle{\Psi}|$$ with the key formula
$$\langle\widehat{{\Psi}}|(|\Phi\rangle\tensor|0\rangle)= \langle{\Psi}|\Phi\rangle.$$

\subsection{Correlation functions}\label{corro}
 Suppose $\mathfrak{X}\in \mathfrak{M}_{g,N}$. Let $\langle{\Psi}|\in V^{\dagger}_{\vec{\lambda}}(\mathfrak{X})$, $Q_1,\dots,Q_M\in C-\{P_1,\dots,P_N\}$,
 $|\Phi\rangle\in \mh_{\vec{\lambda}}$, $Q_1,\dots,Q_M\in C-\{P_1,\dots,P_N\}$, $Q_i\neq Q_j, i<j$ and corresponding elements
 $X_1,\dots,X_M\in \frg$. There is a very important differential called a correlation function
$$\Omega=\langle{\Psi}|X_1(Q_1)X_2(Q_2)\dots X_M(Q_M)|\Phi\rangle\in \bigotimes_{i=1}^M \Omega^1_{C,Q_i}.$$

Here $\Omega^1_{C}$ is the vector bundle of holomorphic one-forms on $C$.
One way to define $\Omega$  is via propagation by vacua: add points $Q_1,\dots Q_M$ with formal coordinates $\psi_1,\dots,\psi_M$ and consider the elements $X_a(-1)|0\rangle$ in the vacuum representation at those points. Then
$$\Omega=\langle\widehat{{\Psi}}|X_1(-1)|0\rangle\tensor X_2(-1)|0 \rangle\dots X_M(-1)|0\rangle\tensor |\Phi\rangle d\psi_1\dots d{\psi_M}.$$
The differential form $\Omega$  is independent of the chosen coordinates (see page 70 of ~\cite{Ueno} for more details).

\section{Formulation of the extension theorem}\label{marcel}
We will henceforth consider the case $C=\pone$, with a chosen $\infty$ and a coordinate $z$ on $\Bbb{A}^1=\pone-\{\infty\}$.
Consider distinct points $P_1,\dots,P_N\in \Bbb{A}^1\subset \pone$ with $z$-coordinates $z_1,\dots,z_N$ respectively. The standard coordinate $z$ endows each $P_i$ with a formal coordinate. Let $\mathfrak{X}$ be the resulting $N$-pointed curve with formal coordinates.

\begin{definition}\label{gold}
For every positive root $\delta$, make a choice of a non-zero element $f_{\delta}$
in $\frg_{-\delta}$.
\end{definition}

Assume that we are given  $\lambda_1,\dots,\lambda_N\in P_k$, such
that $\mu=\sum_{i=1}^{N} \lambda_i$ is in the root lattice. Write
$\mu=\sum n_p \alpha_p$, where $\alpha_p$ are the simple positive roots. It is easy to see that each $n_p$ is non-negative
(for example, by evaluating both sides on $H_{\alpha_p}$).

Let $|\vec{\lambda}\rangle=|{\lambda}_1\rangle\tensor\dots\tensor
|{\lambda}_N\rangle$ be the product of the corresponding highest weight vectors.
 Now consider and fix a  map $\beta:[M]=\{1,\dots,M\} \to \Delta_+$, so that
$\mu=\sum_{a=1}^M \beta(a)$. The maximum value of $M$ is $M=\sum n_p$ and this is the main case. In such cases each $\beta(a)$ is necessarily a simple positive root. However, in various arguments, we will need to consider smaller values of $M$.

Introduce variables $t_1,\dots,t_M$ considered points on $\pone-\{\infty, P_1,\dots, P_N\}$
Consider, for every $\langle\Psi|\in
V^{\dagger}_{\vec{\lambda}}(\mathfrak{X})$, the correlation function
$$\Omega=\Omega_{\beta}(\langle\Psi|) = \langle\Psi|f_{\beta(1)}(t_1)f_{\beta(2)}(t_2)\dots f_{\beta(M)}(t_M)|\vec{\lambda}\rangle.$$
We will use the simplified notation $\langle \Psi| f_{\beta}(\vec{t})|\vec{\lambda}\rangle$
for the right hand-side of the above expression. Note that $\Omega$ has at most poles of the first order along the sum $D$ of the  divisors
\begin{enumerate}
\item[(a)] For  $1\leq a<b\leq M$ the divisor  given by the closure of $t_a=t_b$.
\item[(b)] For $a\in [M]$ and $i\in [N]$, the divisor $t_a= z_i$.
\end{enumerate}
\begin{remark}
The correlation function  $\Omega$ is regular at the generic point of the divisor $t_a=\infty$ for any $a$.
\end{remark}
 It is known that in the genus $0$ situation, conformal blocks embed in the $\frg$-invariants in the dual of the tensor product $(\tensor V_{\lambda_j})^*$ (see ~\cite{Ueno}, Proposition 6.1). The following is an amplification:

\begin{lemma}\label{simpsons}
The map $V^{\dagger}_{\vec{\lambda}}(\mathfrak{X})\to H^0((\pone)^M, (\tensor_{a=1}^M p_a^* \Omega^1_{\pone})(D))$ given by  $\langle\Psi|\mapsto \Omega_{\beta}(\langle \Psi|)$, is injective if $M=\sum n_p$.
\end{lemma}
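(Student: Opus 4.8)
The plan is to reduce the injectivity statement to a non-vanishing assertion about correlation functions, and then to extract that non-vanishing from the representation theory of the affine Lie algebra together with the gauge (Ward) identities. Suppose $\langle\Psi|\in V^{\dagger}_{\vec{\lambda}}(\mathfrak{X})$ satisfies $\Omega_{\beta}(\langle\Psi|)=0$ identically as a section over $(\pone)^M$; I must show $\langle\Psi|=0$ as a functional on $\mh_{\vec\lambda}$. Since genus $0$ conformal blocks embed into the $\frg$-invariants in $(\tensor V_{\lambda_j})^*$, it suffices to show that $\langle\Psi|$ annihilates $|\vec\mu\rangle := |\mu_1\rangle\tensor\dots\tensor|\mu_N\rangle$ for every choice of weight vectors $|\mu_i\rangle\in V_{\lambda_i}\subset\mh_{\lambda_i}$, because these span $\tensor V_{\lambda_j}$; equivalently, by weight considerations and the gauge conditions, it is enough to reach each $|\mu_i\rangle$ starting from the highest weight vector $|\vec\lambda\rangle$ by applying lowering operators $f_\delta$.

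First I would analyze the behaviour of $\Omega_{\beta}(\langle\Psi|)$ near the locus where all the $t_a$ are distinct from each other and from $\infty, P_1,\dots,P_N$. On this open locus the correlation function is literally the matrix coefficient $\langle\Psi| f_{\beta(1)}(t_1)\cdots f_{\beta(M)}(t_M)|\vec\lambda\rangle$, a holomorphic function times $dt_1\cdots dt_M$. If $\Omega_\beta$ vanishes on all of $(\pone)^M$ it certainly vanishes on this locus, hence the matrix coefficient vanishes for all distinct $t_a$. Next I would exploit the local expansions provided by the works \cite{TK,TUY} (invoked via Proposition \ref{richardthomas} later in the paper; here I can use the elementary fact that $f_\delta(t) = \sum_n f_\delta(n)\, t^{-n-1}$ acting at $P_i$ when $t\to z_i$, using the standard coordinate). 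Expanding $f_{\beta(a)}(t_a)$ around $t_a = z_{i(a)}$ for a suitable assignment $a\mapsto i(a)$ and extracting the appropriate Laurent coefficients, the vanishing of the matrix coefficient for all distinct $t_a$ forces the vanishing of $\langle\Psi|$ applied to $f_{\beta(1)}(n_1)\cdots f_{\beta(M)}(n_M)|\vec\lambda\rangle$ for all integers $n_a$; and since $M=\sum n_p$ is maximal, each $\beta(a)$ is a simple root and the relevant modes are the degree-zero modes $f_{\beta(a)}(0)=f_{\beta(a)}$ acting inside the finite-dimensional $V_{\lambda_{i(a)}}$.

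The heart of the argument is then combinatorial/representation-theoretic: I must show that by varying $\beta$ (over all maps $[M]\to\Delta_+$ with $\sum\beta(a)=\mu$) and varying the assignment of each $t_a$ to one of the marked points, the vectors of the form $f_{\delta_1}\cdots f_{\delta_{m_i}}|\lambda_i\rangle$ obtained at the $i$-th factor, multiplied together across $i$, span $\tensor V_{\lambda_i}$ — or at least span enough to conclude $\langle\Psi|=0$ using also the gauge conditions and propagation of vacua. Concretely, for a fixed $i$, the weight-$\nu$ subspace of $V_{\lambda_i}$ is spanned by vectors $f_{p_1}\cdots f_{p_m}|\lambda_i\rangle$ with $p_j$ simple roots and $\lambda_i - \sum\alpha_{p_j} = \nu$ — this is standard (it is the statement that $V_{\lambda_i}$ is generated from the highest weight vector by the lowering Chevalley generators). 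Distributing the $M$ lowering operators among the $N$ marked points in all possible ways, and using the freedom in $\beta$, realizes every pure tensor $f_{\vec p_1}|\lambda_1\rangle\tensor\cdots\tensor f_{\vec p_N}|\lambda_N\rangle$ with $\sum_i\sum_j\alpha_{p_{i,j}}=\mu$; such tensors span the $\mu$-weight-below part $\tensor V_{\lambda_i}$ sitting in the right weight, which is exactly where the conformal block, being a $\frg$-invariant, is supported.

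The main obstacle I anticipate is the bookkeeping in the confluence step: showing rigorously that the identical vanishing of the \emph{global} section $\Omega_\beta$ on $(\pone)^M$ extracts, through iterated residues/Laurent expansions as the $t_a$ collide with distinct marked points in a chosen order, precisely the modes $f_{\beta(a)}(0)$ acting in the finite-dimensional pieces — and in particular that the order of collision does not matter modulo the already-known gauge relations (one needs that the operators $f_{\beta(a)}(t_a)$ at distinct points commute, which they do since $[\frg_{-\beta(a)},\frg_{-\beta(b)}]$ contributes only to the singular, first-order-pole part along $t_a=t_b$ that has been separated off). Handling the case of non-maximal $M$ is not needed here, and the poles along $t_a = z_i$ and $t_a = t_b$ are harmless for this direction since I only use values at generic (distinct) points. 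Once the span statement is in hand, injectivity for $M=\sum n_p$ follows immediately.
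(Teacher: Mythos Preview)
Your approach coincides with the paper's: take iterated residues of $\Omega$ as each $t_a$ tends to some $z_i$, obtain $\langle\Psi|$ evaluated on tensors $\bigotimes_i(\prod f_{\alpha})|\lambda_i\rangle$, and conclude using the embedding $V^\dagger_{\vec\lambda}\hookrightarrow((\bigotimes V_{\lambda_i})^*)^{\frg}$ together with the fact that such monomials span the zero-weight space.

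One correction is needed, however. You invoke ``varying $\beta$ (over all maps $[M]\to\Delta_+$)'', but $\beta$ is \emph{fixed} in the lemma; using other $\beta$'s would only show that the maps $\{\Omega_\beta\}_\beta$ are \emph{jointly} injective, which is a weaker statement than what is claimed. The point is that no variation of $\beta$ is required: since $M=\sum n_p$, the multiset $\{\beta(a):a\in[M]\}$ consists of each simple root $\alpha_p$ with multiplicity exactly $n_p$, and since the correlation function is symmetric in the pairs $(t_a,\beta(a))$ (evident from the propagation-of-vacua definition), one may take the residues in any order. Varying both the partition $[M]=I_1\cup\cdots\cup I_N$ \emph{and} the order of residues within each $I_j$ then produces, in the $i$-th factor, every word $f_{\alpha_{p_1}}\cdots f_{\alpha_{p_m}}|\lambda_i\rangle$, subject only to the constraint that the global multiset of simple roots used equals $\{\alpha_p^{n_p}\}$ --- exactly what is needed to hit the zero-weight space of $\bigotimes V_{\lambda_i}$. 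Replace ``freedom in $\beta$'' with ``freedom in the order of taking residues'' and your argument is complete and matches the paper's.
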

\begin{proof}
Assume $\Omega=0$. We can successively take the residues of the form $\Omega$ in any of the variables. For example if we take the residue of $\Omega$ about $t_1=z_1$, we get a form
$$\langle\Psi| f_{\beta(2)}(t_2)\tensor\dots\tensor f_{\beta(M)}(t_M)\mid (f_{\beta(1)}|\lambda_1\rangle)\tensor |\lambda_2\rangle\tensor\dots\tensor|\lambda_N\rangle.$$

For every partition of $[M]$ into $N$ subsets $I_1\cup\dots\cup I_N$ and every enumeration of elements in $I_j$
$$I_j=\{i_j(1),\dots, i_j(k_j)\},$$ we learn that
$$\langle\Psi| \prod_{j=1}^N f_{\beta(i_j(1))}f_{\beta(i_j(2))}\dots f_{\beta(i_j(k_j))}|\lambda_j\rangle=0.$$
 Since the image of $\langle\Psi|$ in the dual of the tensor product $(\tensor V_{\lambda_j})^*$ is non-zero and invariant under the action of $\frg$, it follows that $\langle\Psi|=0$: The weight spaces in the representations $V_{\lambda_j}$ are obtained from the highest weight vectors by applying $f_{\alpha}$'s with $\alpha$ simple, and we should only consider values of $\langle\Psi|$ on vectors which are in the $0$-weight space of $\frh$.
\end{proof}
\subsection{Some properties of correlation functions}\label{eroica}
Using the gauge condition, it is possible to ``simplify'' correlation functions of the form $\Omega=\langle\Psi|\prod_{a=1}^{M} f_{\beta(a)}(t_a)|\vec{\lambda}\rangle$, so that a particular variable $t_a$ no longer appears
inside the correlation function: $\Omega$ is $dt_a\tensor$ the quantity
$$\sum_{j=1}^N \frac{1}{t_a-z_j}\langle\Psi|\prod_{b=1,b\neq a}^M f_{\beta(b)}(t_b)| \rho_j (f_{\beta(a)})|\vec{\lambda}\rangle\ +\  \sum_{b=1, b\neq a }^M \frac{1}{t_a-t_b} \langle\Psi|[f_{\beta(a)},f_{\beta(b)}](t_b)\prod_{\ell=1,\ell\neq a, \ell\neq b}^M f_{\beta(\ell)}(t_{\ell})|\vec{\lambda}\rangle$$
We have used the rule
\begin{equation}\label{notatio}
dt_a\tensor dt_1\dots dt_{a-1}\dots dt_{a+1}\dots dt_M= dt_1dt_2\dots dt_{a-1}dt_a\dots dt_M.
 \end{equation}
Note that if $\beta(a) + \beta(b)$ is not a  root then, $[f_{\beta(a)},f_{\beta(b)}]=0$, and if it is a root, then $[f_{\beta(a)},f_{\beta(b)}]$ is equal to a non-zero multiple of  $f_{\beta(a)+\beta(b)}$
(note that we have picked these elements arbitrarily, see Definition ~\ref{gold}).
\begin{remark}
We do not have summands with a polar term of order $2$ in $(t_a-t_b)$. This is because $(f_{\delta_1},f_{\delta_2})=0$ for all positive roots $\delta_1$ and $\delta_2$. Introduction of roots
$e_{\delta}$ will complicate the situation here, and introduce such polar terms (these will correspond to motions along the Harder-Narasimhan strata).
\end{remark}

From the above considerations, it follows that $\Omega$ as a form in $t_a$ has at most  simple pole at each of the $z_i$'s and at the $t_b's$ (if $\beta(a) + \beta(b)$ is not a root then there is no pole at $t_b$).
We can iterate this procedure and obtain an expression for $\Omega$ as a sum of terms, each of which has a
simple denominator of total degree $M$ (which shows that correlation are log forms in the sense of Hodge theory), see Section ~\ref{quartet} (and Proposition ~\ref{aty}) for a more refined statement. The refined statement will be used to compare $\Omega$ with the Schechtman-Varchenko forms.

Some properties of $\Omega$ are not easy to see from such an expression. The theory of ~\cite{TK,TUY,Ueno}  allows us to expand $\Omega$ in a power series (on suitable angular sectors) as collections of the $t$-points come together (see Section ~\ref{wsm} and Proposition ~\ref{richardthomas}).

\subsection{The master function}\label{cottard}
Let $\kappa = k+g^*$ where $g^*$ is the dual Coxeter number of $\frg$. The following master function was
discovered by Schechtman-Varchenko  ~\cite{SV}:
$$\mathcal{R}=\displaystyle\prod_{1\leq i<j\leq N}(z_i-z_j)^{\frac{-(\lambda_i,\lambda_j)}{\kappa}}\displaystyle\prod_{a=1}^M \displaystyle\prod_{j=1}^N(t_a-z_j)^{\frac{(\lambda_j,\beta(a))}{\kappa}}
\displaystyle\prod_{1\leq a< b\leq M}  (t_a-t_b)^{\frac{-(\beta(a),\beta(b))}{\kappa}}$$
We will have occasion to use the master function even when $M\neq \sum n_p$ (in such a case the $\beta(a)$'s will be positive, but not necessarily simple roots). In fact it is convenient to have a definition of the master function even in the case $\beta$ is defined on a subset $A$ of $[M]$ (with $M=\sum n_p$ in our applications).\footnote{The relevant situation arises when points $t_i$'s come together.} In this case the master function is a function of  variables $\{t_a, a\in A\}$ and
$$\mathcal{R}=\displaystyle\prod_{1\leq i<j\leq N}(z_i-z_j)^{\frac{-(\lambda_i,\lambda_j)}{\kappa}}\displaystyle\prod_{a\in A} \displaystyle\prod_{j=1}^N(t_a-z_j)^{\frac{(\lambda_j,\beta(a))}{\kappa}}
\displaystyle\prod_{a,b\in A, a< b}  (t_a-t_b)^{\frac{-(\beta(a),\beta(b))}{\kappa}}$$

\subsection{The extension theorem}
Suppose $M=\sum n_p$ (and hence $\beta:[M]\to R\subseteq \Delta_+$).
Let $$X=\{(t_1,\dots, t_M)\in \Bbb{A}^M: t_a\neq t_b, 1\leq a<b\leq M, t_a \neq z_i, i\in [N], a\in [M]\}.$$

Fix a sufficiently divisible positive integer $C$ so that
$$C(\lambda_i,\lambda_j),\ C(\beta(a),\beta(b)),\ C(\beta(a),\lambda_i)\in \Bbb{Z}, \forall a,b\in [M],\ i,j\in [N],\ a<b,\ i<j.$$
Consider an unramified (possibly disconnected) cover of $X$ given by $Y=\{(t_1,\dots, t_M,y)\mid y^{C\kappa} = P\}$,
where
\begin{equation}\label{madhuridixit}
P=\displaystyle\prod_{1\leq i<j\leq N}(z_i-z_j)^{-C(\lambda_i,\lambda_j)}\displaystyle\prod_{a=1}^M \displaystyle\prod_{j=1}^N(t_a-z_j)^{C(\lambda_j,\beta(a))}
\displaystyle\prod_{1\leq a< b\leq M}  (t_a-t_b)^{-C(\beta(a),\beta(b))}.
\end{equation}

Now fix  $\langle\Psi|\in V^{\dagger}_{\vec{\lambda}}(\mathfrak{X})$ and set $\Omega=\Omega_{\beta}(\langle\Psi|)$. The following extension result holds:
\begin{theorem}\label{tempsperdu}
\begin{enumerate}
\item The multi-valued meromorphic form $\mathcal{R}\Omega$ on $X$ is square integrable.
\item The differential form $p^*(\mathcal{R}\Omega)$ extends to an everywhere regular, single valued, differential form of the top order on any smooth and projective compactification $\overline{Y}\supset Y$.
\end{enumerate}
\end{theorem}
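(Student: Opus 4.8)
The plan is to reduce both statements to a single local estimate — that $\mathcal{R}\,\Omega$ has order $>-1$ along every boundary component of a good compactification of $X$ — and then to deduce square‑integrability and extension formally. First I would fix a smooth projective compactification $\overline{X}\supset X$ of wonderful / Fulton–MacPherson type: blow up $(\pone)^M$ along the (strict transforms of the) strata where a subset of the $t_a$'s coincide, with or without simultaneously equalling some $z_i$ or $\infty$, so that the boundary $\overline{X}\setminus X$ is a normal crossings divisor $\bigcup_S D_S$. Let $\overline{Y}\to\overline{X}$ be a resolution of the normalization of $\overline{X}$ in the function field of $Y$; it is a smooth projective compactification of $Y$ with $p$ extending to $\overline{p}\colon\overline{Y}\to\overline{X}$. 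Since a holomorphic form of top degree on a smooth projective variety is a birational invariant (pushforward of the canonical sheaf along a resolution of a smooth target is again the canonical sheaf), and since any $L^2$ holomorphic top‑form on the smooth locus extends holomorphically across a normal crossings boundary (an $L^2$ condition forces the order along each boundary divisor to be $>-1$, hence $\geq 0$, and similarly at crossings), it is enough to prove (1) and to prove (2) for this one $\overline{Y}$. Note that $\Omega$ is holomorphic on $X$ (its poles lie along the diagonals $D$, which are removed from $X$) and $\mathcal{R}$ is holomorphic and nonvanishing on $X$; on $Y$, by the definition of $Y$, $p^*\mathcal{R}$ coincides up to a constant root of unity with the tautological function $y$, so $p^*(\mathcal{R}\,\Omega)$ is a single‑valued holomorphic $M$‑form on $Y$. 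Hence, if $\mathcal{R}\,\Omega$ is square‑integrable on $X$ then, $p$ being finite étale, $p^*(\mathcal{R}\,\Omega)$ is $L^2$ on $Y$, and (2) follows from (1) and the extension principle just recalled.

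It therefore remains to prove (1), i.e. the estimate $e_S:=\operatorname{ord}_{D_S}(\mathcal{R}\,\Omega)>-1$ for every component $D_S$ of $\overline{X}\setminus X$ (the order is well defined since $|\mathcal{R}|$ is single‑valued, $\mathcal{R}$ being multivalued only by phases). By Lemma~\ref{simpsons} and \S\ref{eroica}, $\Omega$ is a rational $M$‑form on $(\pone)^M$ with at worst simple poles along the diagonals $t_a=t_b$, $t_a=z_i$ and regular along $t_a=\infty$, while the singular part of $\mathcal{R}$ along $D_S$ is, by the formula of \S\ref{cottard}, an explicit real power of the colliding linear forms. For $D_S$ the exceptional divisor over a cluster $A\subseteq[M]$ of $t$‑points colliding at a moving point, at some $z_i$, or at $\infty$, writing $\nu=\sum_{a\in A}\beta(a)$, the exponent $e_S$ is a sum of three contributions: the order of vanishing of $\Omega$ along $D_S$; the Jacobian of the blow‑up, which is $|A|-2$ in the moving‑point case (and an analogous, explicitly computable amount in the other two cases); and the master‑function contribution
\[
-\tfrac1\kappa\sum_{a<b\ \text{in}\ A}(\beta(a),\beta(b))\ \Big(+\ \tfrac1\kappa(\lambda_i,\nu)\ \text{at}\ z_i,\quad -\tfrac1\kappa(\nu,\nu)\ \text{at}\ \infty\Big),
\]
the residual constant factor $\prod(z_i-z_j)^{-(\lambda_i,\lambda_j)/\kappa}$ being irrelevant. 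For a bare diagonal $t_a=z_i$ or $t_a=t_b$ the Jacobian term is absent and the order‑of‑$\Omega$ term is minus the pole order, $0$ or $-1$.

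The bare‑diagonal cases are elementary. One may assume $k\geq 1$ (if $k=0$ then $P_k=\{0\}$, $\mu=0$, $M=0$, and there is nothing to prove), so $\kappa=k+g^*\geq 3$; also $(\beta(a),\beta(b))\leq(\theta,\theta)=2$ for all $a,b$. Along $t_a=z_i$: the master term $(\lambda_i,\beta(a))/\kappa$ is $\geq 0$, and if it is $0$ then, since $\beta(a)$ is a simple root and $\lambda_i$ is dominant, $f_{\beta(a)}|\lambda_i\rangle=0$, so the residue of $\Omega$ there vanishes and $\Omega$ is in fact regular; in either case $e_S>-1$. Along $t_a=t_b$: if $\beta(a)+\beta(b)$ is a root then $\beta(a)\neq\beta(b)$, $(\beta(a),\beta(b))<0$, and the master term $-(\beta(a),\beta(b))/\kappa$ is strictly positive, so $e_S>-1$; if $\beta(a)+\beta(b)$ is not a root then $\Omega$ is regular and $e_S\geq -(\beta(a),\beta(b))/\kappa\geq -2/\kappa>-1$, the worst case being $\beta(a)=\beta(b)$.

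The heart of the matter — and the step I expect to be the main obstacle — is the estimate $e_S>-1$ for the exceptional divisors over deep clusters, where the naive pole count of $\Omega$ does not suffice and may even be beaten by a large negative master‑function term. Here the plan is to invoke the confluence (operator‑product) expansions of the correlation function coming from \cite{TK,TUY,Ueno} (Proposition~\ref{richardthomas}): as the cluster $A$ collides, $\Omega$ is governed by a local matrix element of the ordered product $\prod_{a\in A}f_{\beta(a)}(\cdot)$, whose leading term in the collision parameter has order bounded below in terms of the weight $\nu$ and the relevant conformal weights, and which vanishes identically — a general ``vanishing along partial diagonals'' — precisely when that product annihilates the highest weight vector $|\lambda_i\rangle$ (for a collision at $z_i$), or lands in a zero weight space, or would create an inadmissible state at $\infty$. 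Feeding this lower bound into $e_S=(\text{order of }\Omega)+(\text{Jacobian})+(\text{master term})$ and simplifying with the identities $\sum_{a<b\ \text{in}\ A}(\beta(a),\beta(b))=\tfrac12\bigl((\nu,\nu)-\sum_{a\in A}(\beta(a),\beta(a))\bigr)$ and $\sum_j(\lambda_j,\beta(a))=(\mu,\beta(a))$ with $\mu=\sum_a\beta(a)$, and using once more $\kappa\geq 3$ and $(\beta(a),\beta(a))\leq 2$, should yield $e_S>-1$ in every case, completing the proof of (1) and hence of the theorem. The delicate point throughout is to make the representation‑theoretic vanishing precise enough, uniformly over all nested strata, to dominate the root‑theoretic quantities $(\nu,\nu)$ and $(\lambda_i,\nu)$ that enter through $\mathcal{R}$.
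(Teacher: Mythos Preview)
Your overall framework is exactly right and matches the paper's: Theorem~\ref{tempsperdu} is deduced from the positivity of the logarithmic degree $d^S(\mathcal{R}\Omega)$ along every abnormal stratum $S$ (this is the content of Theorem~\ref{Proust}), and the $L^2$/extension formalities you recall are correct. Your treatment of the codimension-one strata $t_a=z_i$ and $t_a=t_b$ is fine (indeed more explicit than the paper, which absorbs these into the general cluster case).

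The genuine gap is in the ``heart of the matter''. You propose to feed the single lower bound coming from Proposition~\ref{richardthomas} (the Kac--Moody weight inequality) into the master-function/Jacobian bookkeeping and conclude. This does not work by itself. Concretely, for a cluster $A$ with $\gamma=\sum_{a\in A}\beta(a)$, the bound from Proposition~\ref{richardthomas} gives (in the moving-point case) $d^S(\Omega)\geq\frac{(\gamma,\gamma)}{2k}-1$, and hence
\[
d^S(\mathcal{R}\Omega)\ \geq\ \frac{g^*(\gamma,\gamma)}{2k\kappa}-1+\sum_{a\in A}\frac{(\beta(a),\beta(a))}{2\kappa},
\]
which is \emph{not} always positive (e.g.\ when $(\gamma,\gamma)$ is small and $|A|$ is small). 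What the paper actually does is a two-step argument that you are missing:
\begin{enumerate}
\item A \emph{reduction procedure} (Sections~\ref{rememberme}--\ref{deligne}): whenever the lowest-degree term of $w\Omega$ is not divisible by some $(t_a-t_b)$, one replaces the pair $f_{\beta(a)},f_{\beta(b)}$ by $[f_{\beta(a)},f_{\beta(b)}]$ and passes to a correlation function $\Omega^*$ in fewer variables, with the exact relation $d^S(\mathcal{R}\Omega)=d^{S^*}(\mathcal{R}^*\Omega^*)-\frac{(\beta(a),\beta(b))}{\kappa}$ (equation~\eqref{G2thing}). Iterating yields a $\widetilde{\Omega}$ whose lowest-degree term is pole-free along all internal diagonals of the cluster, so that one has a second, purely combinatorial, bound $d^{\widetilde S}(\widetilde\Omega)\geq |T|-1$ (or $|T|$, or $0$, depending on the stratum type).
\item A \emph{contradiction argument} playing the two bounds against each other: assuming both the combinatorial bound and the Kac--Moody bound fail to give $d^S(\mathcal{R}\Omega)>0$ produces incompatible upper and lower estimates on $(\gamma,\gamma)$ (or on $|T|$), and one closes the remaining cases with the root-system Lemmas~\ref{ravishankar} and~\ref{basiclemma}.
\end{enumerate}
Neither bound alone suffices, and the reduction step is what makes the combinatorial bound available at all. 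Your sketch, which invokes only the confluence/OPE estimate, will not close; you need to discover (or import) this merging-of-points reduction and the ensuing two-pronged estimate.
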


\begin{definition}
Let $Z$ be an $n$-dimensional smooth algebraic variety, and $\Gamma$ a possibly multi-valued $n$-form of the following form: For every $p\in Z$, there is an analytic open subset $U$ of $Z$ containing $p$, such that $\Gamma$ can be expressed as  $\Gamma=f\omega$ where
\begin{enumerate}
\item
$\omega$ is a (single valued) meromorphic form on $U$.
\item Some positive integer power of $f$ is a (single valued) meromorphic function on $U$.
\end{enumerate}
Let $S\subset Z$ be an irreducible subvariety. We will denote the logarithmic degree of $\Gamma$ along $S$ by $d^{S}(\Gamma)$. (See ~\cite{loo,Var} for some  background on this concept). Briefly: Blow up  $Z$ along $S$, and let $E$ be the exceptional divisor. Then, $d^S(\Gamma)-1$ is the order of vanishing of (any branch of) $\Gamma$ along $E$.
\end{definition}
\begin{defi}
Let $\widetilde{D}\subset (\pone)^M$ be the sum of the divisor $D$ (defined before Lemma ~\ref{simpsons}) and the divisors $t_a= \infty$ (for $a=1,\dots, M$).
\end{defi}
Ramadas' strategy ~\cite{TRR}  is to prove this kind of theorem by showing that the logarithmic degree of $\mathcal{R}\Omega$ along any abnormal stratum of the divisor $\widetilde{D}$ is positive (see ~\cite{loo,Var} for the notation that we use here). These abnormal strata are of three kinds:
\begin{enumerate}
\item[(S1)] A certain subset of the $t's$ come together (to an arbitrary moving point). That is
$t_1=t_2=\dots= t_L$ after renumbering (possibly changing $\beta$).
\item[(S2)] A certain subset of the $t's$ come together to one of the $z$'s. That is
$t_1=t_2=\dots= t_L=z_1$ after renumbering (possibly changing $\beta$).
\item[(S3)] A certain subset of the $t's$ come together to $\infty$. That is
$t_1=t_2=\dots= t_L=\infty$ after renumbering (possibly changing $\beta$).
\end{enumerate}
More precisely we prove the following theorem which implies Theorem ~\ref{tempsperdu}.
\begin{theorem}\label{Proust}
Assume that $M=\sum n_p$. The logarithmic degree of $\mathcal{R}\Omega$ along each of the strata $(S1)$, $(S2)$ and $(S3)$ is positive.
\end{theorem}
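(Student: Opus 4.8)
The plan is to analyze each of the three strata $(S1)$, $(S2)$, $(S3)$ separately, in each case computing two quantities along the stratum: the logarithmic degree of the master function $\mathcal{R}$ (a purely combinatorial computation from the explicit formula) and the logarithmic degree of the correlation function $\Omega$ (which requires the representation-theoretic input of ~\cite{TK,TUY,Ueno}, i.e. the power-series expansion of $\Omega$ as the relevant $t$-points coalesce). Recall that for an irreducible stratum $S$ of codimension $c$, the logarithmic degree $d^S$ of a product is the sum of the logarithmic degrees, and that $d^S$ of a form behaves additively under the blow-up: one must show $d^S(\mathcal{R}) + d^S(\Omega) \geq 1$, equivalently that the order of vanishing along the exceptional divisor $E$ of $\mathcal{R}\Omega$ is $\geq 0$.

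For stratum $(S1)$, where $t_1 = \dots = t_L$ collide at a moving point $s$ not equal to any $z_i$ or $\infty$, I would introduce local coordinates $t_a = s + u\, w_a$ (with $\sum w_a$ fixed or one $w_a$ normalized) so that $u = 0$ cuts out $E$; the codimension is $L-1$. The master function contributes $-\frac{1}{\kappa}\sum_{1\le a<b\le L}(\beta(a),\beta(b))$ to the exponent of $u$, which since the $\beta(a)$ are simple roots is $-\frac{1}{\kappa}$ times a sum of terms each $\leq 0$; combined with the Haar-type factor from $du \wedge dw_2\wedge\dots$ this gives a controlled (typically fairly negative) contribution. The correlation function $\Omega$, by the operator-product/propagation-of-vacua expansion, when the $L$ points $f_{\beta(1)}(t_1)\dots f_{\beta(L)}(t_L)$ fuse, is governed by the action of $\prod f_{\beta(a)}$ on the vacuum module at the fused point; the leading term is controlled by the degree in $\hat{\frg}_-$ needed to produce a nonzero vector, and one reads off $d^{S1}(\Omega)$ from the minimal such degree. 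The key point — which parallels Ramadas' vanishing along partial diagonals — is that iterated brackets of the $f_{\beta(a)}$ land in root spaces $f_\gamma$ with $\gamma$ a sum of simple roots, and the energy/weight bookkeeping forces enough vanishing to beat the master-function pole. I expect the clean statement to be: $d^{S1}(\Omega)$ equals (up to the coordinate normalization) something like the number of independent relations among the colliding roots, and the inequality $d^{S1}(\mathcal{R}\Omega) > 0$ then follows from the positivity of $\kappa = k + g^*$ together with $(\beta(a),\beta(b)) \le 1$ for distinct simple roots.

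For stratum $(S2)$, $t_1 = \dots = t_L = z_1$, the analysis is similar but now the fusion happens at the marked point $P_1$ carrying $\mathcal{H}_{\lambda_1}$: the relevant expansion is of $f_{\beta(1)}(t_1)\dots f_{\beta(L)}(t_L)|\lambda_1\rangle$ as all $t_a \to z_1$, and the leading exponent is read from the smallest weight-space level of $\bigl(\prod f_{\beta(a)}\bigr)|\lambda_1\rangle$ that is nonzero, using the integrability relation $X_\theta(-1)^{k-(\theta,\lambda_1)+1}|\lambda_1\rangle = 0$ when $\beta$-sums reach $\theta$. The master function here contributes $\frac{1}{\kappa}\sum_a (\lambda_1,\beta(a))$ from the $(t_a - z_1)$ factors plus the $-\frac{1}{\kappa}\sum_{a<b}(\beta(a),\beta(b))$ from the $t_a - t_b$ factors; the sign of $(\lambda_1,\beta(a))$ can go either way, so one genuinely needs the $\Omega$-side vanishing, and the combination should telescope to $(\lambda_1 - \sum_{a\le L}\beta(a), \text{something})$-type expressions controlled by dominance of $\lambda_1$ and the level bound $(\lambda_1,\theta)\le k$. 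For $(S3)$, $t_1=\dots=t_L=\infty$, I would use propagation of vacua to move a vacuum to $\infty$ and run the same fusion analysis; by the Remark that $\Omega$ is regular at the generic point of $t_a = \infty$, the $\Omega$ side already contributes nonnegatively, and the master function near $\infty$ — after the coordinate change $t_a = 1/u_a$, which introduces extra $du_a/u_a^2$ factors — contributes $-\frac{1}{\kappa}\sum(\beta(a),\beta(b)) + \frac{1}{\kappa}\sum_{a,j}(\lambda_j,\beta(a)) + 2L$-type terms; since $\sum_j \lambda_j = \mu = \sum_a \beta(a)$ (over all of $[M]$), the relevant sums reorganize and the $+2L$ from the forms dominates.

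The main obstacle, I expect, is \emph{not} the master-function bookkeeping (routine) but the precise determination of $d^S(\Omega)$ along each stratum: one needs the local power-series form of the correlation function from ~\cite{TK,TUY} sharply enough to identify the leading exponent, and one must handle the subtlety flagged in the text that "renumbering (possibly changing $\beta$)" is allowed — i.e. that after collision the effective root decorations are the sums of the colliding simple roots, which may no longer be simple, so the integrability relations at the fused representation (including the level-$k$ null vector) must be invoked correctly. Making the expansion in ~\cite{TK,TUY} yield a clean lower bound on the vanishing order of $\Omega$ — uniformly over all the combinatorial choices of which $t$'s collide and in what order — is the technical heart; I would isolate this as a proposition (the analog of Proposition ~\ref{richardthomas}) about the $\hat{\frg}$-module structure, stating that the leading coefficient of the fused correlation function is itself a correlation function with the colliding insertions replaced by a single insertion of lower total degree, and then induct on $M$.
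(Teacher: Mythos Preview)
Your outline correctly names the ingredients (master-function bookkeeping, power-series expansion of $\Omega$, the Kac--Moody input behind Proposition~\ref{richardthomas}), but there is a genuine gap: a single direct estimate of $d^S(\mathcal{R}) + d^S(\Omega)$ along each stratum does not close. For $(S1)$, your claim that positivity of $\kappa$ together with bounds on $(\beta(a),\beta(b))$ suffices fails when many points collide: after clearing poles, the combinatorial divisibility of the lowest-degree term only yields $d^{S}(\Omega)\ge |T|-1$ (for a suitable $|T|$), while the master-function side is of size $|T|(|T|-1)/\kappa$, so for $|T|\gtrsim\kappa$ the direct route gives nothing. The paper therefore runs two additional mechanisms that your proposal omits. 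First, an \emph{iterative reduction}: whenever the lowest-degree term of the pole-cleared $\Omega$ is not divisible by some $t_a-t_b$, one collapses $t_a,t_b$ (necessarily $\beta(a)+\beta(b)$ is then a root), replacing the two insertions by a single one labeled by the sum; this preserves $d^{S}(\Omega)$ and shifts $d^S(\mathcal{R}\Omega)$ by explicit root-pairing terms (equations~\eqref{simplerthing}--\eqref{jalsagar}, \eqref{beatles30}), at the cost of allowing non-simple $\tilde\beta(a)$. Second, one then has \emph{two} independent lower bounds on the reduced $d^{\tilde S}(\tilde\Omega)$ --- the combinatorial one ($\ge |T|-1$, resp.\ $\ge 0$, $\ge |T|$ for the three strata) and the Kac--Moody one (Lemmas~\ref{basic},~\ref{basicnew}) --- and the proof is by \emph{contradiction}: assuming both plugged into \eqref{beatles30} (resp.\ \eqref{beatles300}, \eqref{beatles3}) give $\le 0$ yields incompatible inequalities on $(\gamma,\gamma)$, which are finally closed off by two root-system facts you do not mention (Lemma~\ref{ravishankar}: $\sum_{i<j}(\delta_i,\delta_j)<0$ for simple roots summing to a root; Lemma~\ref{basiclemma}: $\sum_i(\delta_i,\delta_i)<2g^*$).

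Two smaller points also need correction. Your description of Proposition~\ref{richardthomas} as ``the leading coefficient of the fused correlation function is itself a correlation function \dots\ induct on $M$'' misidentifies its content: it is not an OPE/recursion statement but the weight inequality $\sum_a b_a \le \tfrac{2(\lambda,\gamma)-(\gamma,\gamma)}{2k}$ (Kac, Thm.~12.5(d)), used as a hard lower bound on $d^{\tilde S}(\tilde\Omega)$; the denominator $k$ (not $\kappa$) is precisely what makes the two-estimate contradiction bite. And for $(S3)$ the claim that ``the $+2L$ from the forms dominates'' is false: the master function acquires an extra pole of order $(\beta(a),\beta(a))/\kappa$ at $\infty$ in each $t_a$ (see~\eqref{inegality11}), and the contradiction there genuinely requires Lemma~\ref{basiclemma}.
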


The proof of Theorem ~\ref{Proust} following the proof of a similar assertion in ~\cite{TRR} will be broken up into three parts corresponding to the strata (S1), (S2) and (S3). The proofs of these  three cases share some common features, chiefly that the degrees  of the correlation functions $\Omega$  are  controlled by power series expansions (see Proposition ~\ref{richardthomas}), and also in that the poles along  partial diagonals are seen as a reflection of the properties of the Lie algebra $\frg$ (for example that the logarithmic degree along any strata of $\Omega$ is $\geq 0$).
These three cases also differ in some important details. Therefore, we have given detailed proofs with some ideas and methods repeated three times. The proof of Theorem ~\ref{Proust}  uses ideas from ~\cite{loo,Var} (in addition to those in ~\cite{TRR}): in particular, the proof there of Ramadas' vanishing theorem where variables are set equal to each other, and the pole analysis at finite parabolic points in the extension theorem.

\section{Proof of Theorem  ~\ref{Proust} on the (S1) stratum:  ``When some of the points  come together''.}\label{rememberme}

Let $S$ be the set $t_1=\dots=t_L$. Let $B$ be the completion of the local ring of $(\pone)^M$ along $S$, at its generic point. Then $B=K_S[[u_2,\dots, u_L]]$ where $K_S$ is the function field of $S$, and $u_i=t_i-t_1$. Clear poles of $\Omega$ at the generic point of $S$ by multiplying by $w= \prod_{1\leq a<b\leq L }(t_a-t_b)$:
$$w\Omega = g d\vec{t},\ g= \sum_{d\geq d_0} g_d(u_2,\dots, u_L).$$

Here $g_d$ is a homogeneous polynomial in the $u$'s of (total) degree $d$ with coefficients in $K_S$, and $d_0$ is the smallest possible degree
(so that $g_{d_0}\neq 0$). Our task is to prove that the logarithmic degree of $\mathcal{R}\Omega$ along $S$, written as $d^S(\mathcal{R}\Omega)$, satisfies the inequality
\begin{equation}\label{inegality}
d^S(\mathcal{R}\Omega)=\bigl(d_0-(L(L-1)/2) +(L-1)\bigr) - \sum_{1\leq a<b\leq L}\frac{(\beta(a),\beta(b))}{\kappa}>0
\end{equation}
(the term in the first bracket is the logarithmic degree of $\Omega$).
\subsection{Reduction to a case in which ``the lowest degree terms'' have no poles as $t_a$ approaches $t_b$ for $1\leq a<b\leq L$}
 We will try to reduce the number of variables.  Let us suppose for example that
\begin{itemize}
\item $g_{d_0}$ is not divisible by $u_2$.
\end{itemize}

If $\beta(1)+\beta(2)$ is not a positive root then $\Omega$ does not have a pole along $t_1=t_2$, so automatically $g_{d_0}$ is divisible by $u_2$.
So let us assume that $\beta(1)+\beta(2)$ is a positive root $\delta$. Now divide $g_d$ by $u_2$ with remainder:
$$g_{d}= r_{d}(u_3,\dots, u_L) + u_2 q_d(u_2,\dots,u_L)$$
and $r_{d_0}\neq 0$.

Let us consider in parallel the new correlation function $\Omega^*$ with variables $t_1,t_3,\dots, t_M$ and $\beta^*(1)=\delta$, along the stratum $S^*$ given by $t_1=t_3=\dots=t_L$. We multiply
the corresponding correlation function by $$w^*=\prod_{1\leq a<b\leq L, a\neq 2, b\neq 2}(t_a-t_b)$$ write an expansion
$$w^*\Omega^*= \sum_{d\geq \tilde{d}_0} \tilde{g}_d(u_3,\dots, u_L) dt_1 dt_3 \dots dt_N$$

We know that(see equation ~\eqref{notatio} and Section ~\ref{eroica}) $$\Omega= dt_2\tensor\frac{\Omega^*}{t_2-t_1} +\hat{\Omega}$$
where $\hat{\Omega}$ is regular at $t_1=t_2$ (and at most poles of the first order as $t_1$ approaches the other variables).
 Multiply by $w$, and get
$$w\Omega=dt_2\tensor w^*\prod_{a\geq 3} (t_2-t_a){\Omega^*}  +w\hat{\Omega}$$

Set $u_2=0$ (that is, $t_2=t_1$) now and get $r_d(u_3,\dots, u_L)= -\tilde{g}_d(u_3,\dots,u_L) \prod_{a>2}(t_1-t_a)$. We can therefore conclude that $\tilde{g}_{d_0-(L-2)}\neq 0$. Now if $\tilde{g}_{d_0-(L-2)-s}\neq 0$, then $r_{d_0-s}\neq 0$ which would imply that $g_{d_0-s}\neq 0$. Therefore we conclude
\begin{equation}\label{clef}
d_0 =\tilde{d_0} +(L-2)
\end{equation}
Therefore (where $\mathcal{R}^*$ is the master function of $\beta^*$ and the variables $t_1,t_3,\dots, t_L$)
$$d^S(\beta,\mathcal{R}\Omega)-d^{S^*}(\beta^*,\mathcal{R}^*\Omega^*)=$$
$$(L-2)+ (L-1)(L-2)/2 - L(L-1)/2 +(L-1)-(L-2) - \frac{(\beta(1),\beta(2))}{\kappa}$$
$$= -\frac{(\beta(1),\beta(2))}{\kappa}.$$

We obtain the equalities
\begin{equation}\label{simplerthing}
d^S(\Omega)= d^{S^*}(\Omega^*)
\end{equation}

\begin{equation}\label{G2thing}
d^S(\mathcal{R}\Omega)= d^{S^*}(\mathcal{R}^*\Omega^*) -\frac{(\beta(1),\beta(2))}{\kappa}
\end{equation}

(think of this as ``caused by'' the loss of one variable $t_2$, a polar term $(t_1-t_2)^{-1}$, and a fractional power $(t_1-t_2)^{-\frac{(\beta(1),\beta(2))}{\kappa}}$).

We continue this process as far as possible (we have not used the fact that $\beta(a)$ are simple roots above, the same arguments apply even if $\beta(a)$ are arbitrary positive roots). At every step ``two variables come together''. We will remove the one with the larger subscript from our list of variables. So we will have a $\tilde{\beta}$ defined on a subset $A$ of $[M]$,  a new correlation function $\widetilde{\Omega}$, a new stratum $\widetilde{S}$ (of some points in $\{t_a:a\in A\}$ equalling each other) and a new master function $\widetilde{\mathcal{R}}$ corresponding to $\tilde{\beta}$ and the variables $\{t_a\}, a\in A$. For every
$t\in A$, let the set of $a\in [M]$ that got together to give $t$ be denoted by $\neww(t)$ (``the set of maximal ancestors''). Let the set of points that descend from $1,\dots,L$ be denoted by $T$. For example if the process ends at the first step as above, then $A=\{1,3,\dots,M\}$, $T=\{1,3,\dots,L\}$ and the ancestors of $1$ are $1$ and $2$.

For convenience assume $1\in T$.  Note that $A=T\cup\{L+1,\dots,M\}$. The case $|T|=1$ is separately covered below.
\begin{remark} At this point $\widetilde{\Omega}$ may still have poles along a partial diagonal $t_a= t_b$ where $a, b\in T, a\neq b$. Our assumption is only that a suitable ``lowest degree term'' is pole-free.
\end{remark}
The logarithmic degree of $\widetilde{\Omega}$ is at least $|T|-1$, because the appropriate lowest degree term is divisible by all pairwise differences (the role of $t_1$ in the above argument can be played by any of the variables $t_1,\dots,t_L$):
\begin{equation}\label{longrun}
d^{\widetilde{S}}(\widetilde{\Omega})\geq |T|-1
\end{equation}
Now, because of equation ~\eqref{G2thing},
\begin{equation}\label{jalsagar}
d^S(\mathcal{R}\Omega)= d^{\widetilde{S}}(\widetilde{\mathcal{R}}\widetilde{\Omega}) -\sum_{t\in T}\sum_{a,b\in \neww(t), a<b}\frac{(\beta(a),\beta(b))}{\kappa}
\end{equation}
we also have $d^S(\Omega)=d^{\widetilde{S}}(\widetilde{\Omega})$ because of equation ~\eqref{simplerthing} and hence
\begin{equation}\label{beatles10}
d^S(\mathcal{R}\Omega)= d^{\widetilde{S}}(\widetilde{\Omega}) - \sum_{1\leq a<b\leq L}\frac{(\beta(a),\beta(b))}{\kappa}
\end{equation}
Introduce $\gamma=\sum_{a=1}^L \beta(a)=\sum_{t\in T}\tilde{\beta}(t)$,
\begin{equation}\label{phillips}
(\gamma,\gamma)-\sum_{a=1}^L({\beta}(a),{\beta}(a))= 2 \sum_{1\leq a<b\leq L}({\beta}(a),{\beta}(b))
\end{equation}
so we find
\begin{equation}\label{beatles30}
d^S(\mathcal{R}\Omega)= d^{\widetilde{S}}(\widetilde{\Omega})-\frac{(\gamma,\gamma)}{2\kappa} +
\sum _{a=1}^L \frac{(\beta(a),\beta(a))}{2\kappa}
\end{equation}
We have two estimates for
$d^{\widetilde{S}}(\widetilde{\Omega})$: it is at least $|T|-1$ and also at least  $\frac{(\gamma,\gamma)}{2k}-1$ (by Lemma ~\ref{basic}).  Assume that both estimates lead to lower bounds for $d^S(\mathcal{R}\Omega)$, which are $\leq 0$. The basic idea in the proof is that one gets both a lower bound and an upper bound for $(\gamma,\gamma)$ which are in conflict.

We obtain,
\begin{equation}\label{nooner1}
|T|-1 -\frac{(\gamma,\gamma)}{2\kappa} +\sum _{a=1}^L \frac{(\beta(a),\beta(a))}{2\kappa}\leq 0
\end{equation}
and
$$\frac{(\gamma,\gamma)}{2k}-1-\frac{(\gamma,\gamma)}{2\kappa} + \sum_{a=1}^L \frac{(\beta(a),\beta(a))}{2\kappa}\leq 0$$
that is
$$\frac{g^*(\gamma,\gamma)}{2k\kappa}\leq 1- \sum_{a=1}^L \frac{(\beta(a),\beta(a))}{2\kappa}$$
or that
$$\frac{(\gamma,\gamma)}{2\kappa}\leq \frac{k}{g^*}-\frac{k}{g^*}\sum_{a=1}^L \frac{(\beta(a),\beta(a))}{2\kappa}$$
which gives (using inequality ~\eqref{nooner1}), the inequality
$$|T| \leq 1+ \frac{k}{g^*}-\bigl(\frac{k+g^*}{g^*}\bigr)\sum_{a=1}^L \frac{(\beta(a),\beta(a))}{2\kappa}=\frac{\kappa}{g^*}- \sum_{a=1}^L \frac{(\beta(a),\beta(a))}{2g^*}$$

We will now cover the case $|T|<\frac{\kappa}{g^*}$. Use Lemma ~\ref{ravishankar} and equation ~\eqref{jalsagar}, to see that
$$d^S(\mathcal{R}\Omega)\geq d^{\widetilde{S}}(\widetilde{\mathcal{R}}\widetilde{\Omega})
=|T|-1 -\sum _{t\neq t', t<t'}\frac{(\tilde{\beta}(t),\tilde{\beta}(t'))}{\kappa}$$

Using the numerical inequalities, $(\beta(t),\beta(t'))\leq 2$, we find
$$d^S(\mathcal{R}\Omega)\geq  |T|-1 -\frac{|T|(|T|-1)}{\kappa}$$
 If $|T|<\kappa$ and $|T|>1$ then the above quantity is positive. If $|T|=1$, then Lemma ~\ref{ravishankar} below assures us that $d^S(\mathcal{R}\Omega)> d^{\widetilde{S}}(\widetilde{\mathcal{R}}\widetilde{\Omega})=0.$ We have thus covered
all cases (we are assuming that $L>1$).
\begin{lemma}\label{ravishankar}
For every positive root $\delta=\sum_{i=1}^n\delta_i$ where $\delta_i$ are positive simple roots (possibly repeated in the sum), and $n>1$,
$$I(\delta)=\sum_{1\leq i< j\leq n}(\delta_i,\delta_j)<0.$$
\end{lemma}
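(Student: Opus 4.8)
The plan is to argue by induction on the height $n=\operatorname{ht}(\delta)=\sum_p n_p$ (this is exactly the parameter $M$ in the applications), using the elementary identity
$$2I(\delta)=(\delta,\delta)-\sum_{i=1}^n(\delta_i,\delta_i),$$
obtained by expanding $(\delta,\delta)=(\sum_i\delta_i,\sum_j\delta_j)$. In particular $I(\delta)$ depends only on $\delta$, the multiset $\{\delta_i\}$ being just the simple roots counted with the coefficients of $\delta$, so the statement is unambiguous. For the base case $n=2$ one has $\delta=\alpha_p+\alpha_q$ with $\alpha_p\neq\alpha_q$ (since $2\alpha$ is never a root) a positive root, hence $(\alpha_p,\alpha_q)<0$ — two simple roots have non-positive pairing, and it is strictly negative precisely when their sum is a root — so $I(\delta)=(\alpha_p,\alpha_q)<0$.

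For the inductive step $n\geq 3$, I would first choose a simple root $\alpha$ in the support of $\delta$ with $(\delta,\alpha)>0$; such an $\alpha$ exists because $0<(\delta,\delta)=\sum_p n_p(\delta,\alpha_p)$ forces at least one summand to be positive. Then $\langle\delta,\alpha^\vee\rangle\geq 1$ and $\delta\neq\alpha$, so the $\alpha$-root string through $\delta$ descends below $\delta$. Let $\beta'$ be the lowest root in that string, so $\delta=\beta'+m\alpha$ with $m\geq 1$; a support argument (the coefficient of $\delta$ on any simple root other than $\alpha$ occurring in its support is positive and unchanged under subtracting multiples of $\alpha$) shows that $\beta'$ is again a positive root, and since $\beta'$ sits at the bottom of an $\alpha$-string $\beta',\beta'+\alpha,\dots,\beta'+N\alpha$ with $N\geq m$, we get $\langle\beta',\alpha^\vee\rangle=-N$, i.e.
$$(\beta',\alpha)=-\tfrac{N}{2}(\alpha,\alpha)\leq-\tfrac{m}{2}(\alpha,\alpha).$$

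The key step is then the additivity of $I$ under this decomposition: since the multiset of $\delta$ is that of $\beta'$ together with $m$ further copies of $\alpha$,
$$I(\delta)=I(\beta')+m(\beta',\alpha)+\binom{m}{2}(\alpha,\alpha)\leq I(\beta')-\tfrac{m^2}{2}(\alpha,\alpha)+\tfrac{m(m-1)}{2}(\alpha,\alpha)=I(\beta')-\tfrac{m}{2}(\alpha,\alpha).$$
Here $(\alpha,\alpha)>0$ by positive-definiteness of the form on the real span of the roots, and $I(\beta')\leq 0$: by the inductive hypothesis if $\operatorname{ht}(\beta')\geq 2$, and trivially ($I$ of a simple root is an empty sum) if $\operatorname{ht}(\beta')=1$. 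Hence $I(\delta)\leq I(\beta')-\tfrac{m}{2}(\alpha,\alpha)<0$, closing the induction. The only point that needs genuine care is that $(\delta-\alpha,\alpha)\geq 0$ is possible, so removing a single copy of $\alpha$ need not decrease $I$; passing all the way down to the bottom $\beta'$ of the $\alpha$-string and invoking the displayed quadratic inequality in $m$ and $N$ is exactly what circumvents this, and everything else is routine root-string bookkeeping.
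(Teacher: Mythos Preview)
Your proof is correct and handles all simple Lie algebras uniformly. The paper takes a different route: it writes $\delta=\delta_1+\delta_2$ as a sum of two \emph{positive roots} (not necessarily simple) and uses the additivity $I(\delta_1+\delta_2)=I(\delta_1)+I(\delta_2)+(\delta_1,\delta_2)$ together with the fact that two positive roots whose sum is a root satisfy $(\delta_1,\delta_2)\leq 0$ provided $\frg\neq G_2$; the $G_2$ case is then checked by hand on the six positive roots. Your argument instead peels off \emph{all} copies of a well-chosen simple root $\alpha$ at once, descending to the bottom $\beta'$ of the $\alpha$-string and invoking the string-length bound $N\geq m$ to produce the quadratic estimate $I(\delta)\leq I(\beta')-\tfrac{m}{2}(\alpha,\alpha)$. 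This is exactly what is needed to absorb the positive term $\binom{m}{2}(\alpha,\alpha)$ that appears in $G_2$ (where, for instance, $(2\alpha_1+\alpha_2,\alpha_1)=\tfrac{1}{3}>0$, so removing a single $\alpha_1$ would fail), and it dispenses with the case distinction entirely. The paper's argument is perhaps a shade more transparent in the simply-laced case, where every $(\delta_1,\delta_2)$ encountered is strictly negative; yours buys uniformity at the cost of the slightly more careful root-string bookkeeping you flag in your last sentence.
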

\begin{proof}
We divide the proof into two cases:
\begin{enumerate}
\item $\frg\neq G_2$: If the theorem is true for $\delta_1$ and $\delta_2$ then the theorem is true for their sum
$\delta_1+\delta_2$, because
$$I(\delta_1+\delta_2)= I(\delta_1)+I(\delta_2)+ (\delta_1,\delta_2)$$
But since $\delta _1$, $\delta_2$ are positive roots, whose sum is also a root, and $\frg\neq G_2$, we have \footnote{Use results on Page 278 in ~\cite{bourbaki}. The inequality is strict in the ADE case.}
 $(\delta_1,\delta_2)\leq 0$. We can now induct and get the desired statement. At the first step,
$\delta_1$ and $\delta_2$ are distinct simple roots whose sum is a root and $I(\delta_1+\delta_2)=(\delta_1,\delta_2)<0$ (a strict inequality, see ~\cite{FH}, Lecture 21).
\item $\frg=G_2$. This situation is easy enough for a direct verification. There are $6$ positive roots: $$\alpha_1,\ \alpha_2,\ \alpha_1+\alpha_2,\ 2\alpha_1+\alpha_2,\ 3\alpha_1+\alpha_2,\ 3\alpha_1+2\alpha_2$$
also note (employing the normalization $(\theta,\theta)=2$) $$(\alpha_1,\alpha_1)=\frac{2}{3}, (\alpha_2,\alpha_2)=2, (\alpha_1,\alpha_2)=-1$$
\end{enumerate}
\end{proof}
\begin{lemma}\label{basic}\footnote{For $\frg=\mathfrak{sl}_2$, one gets Ramadas' vanishing theorem when applied to the conformal block situation.}
$d^{\widetilde{S}}(\widetilde{\Omega})+1\geq \frac{1}{2k}(\gamma,\gamma)$.
\end{lemma}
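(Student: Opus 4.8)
The plan is to reduce the asserted inequality to a statement about the level-$k$ vacuum module $\mathcal{H}_0$ of $\hfrg$, generalizing the way the $\mathfrak{sl}_2$ case reduces to Ramadas' vanishing theorem; the remaining input is then a Heisenberg/Sugawara estimate valid for every $\frg$.

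\emph{Reduction to the vacuum module.} By Proposition~\ref{richardthomas}, as the points $\{t_a:a\in T\}$ come together at the moving point of $\widetilde{S}$ the correlation function $\widetilde{\Omega}$ admits a power-series expansion whose coefficients are conformal blocks evaluated on vectors inserted, at that point, in the vacuum module $\mathcal{H}_0$. Since the coalescing currents $f_{\widetilde{\beta}(a)}(t_a)$, $a\in T$, together carry $\frh$-weight $-\sum_{a\in T}\widetilde{\beta}(a)=-\gamma$, only vectors of $\frh$-weight $-\gamma$ occur, and a vector of $\frh$-weight $-\gamma$ and grade $d$ (the grade equals the $L_0$-eigenvalue, as $\mathcal{H}_0$ has lowest conformal weight $0$) contributes to the expansion a term homogeneous of degree $d-|T|$ in the differences $t_a-t_b$. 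After the reduction already performed for the $(S1)$ stratum — so that the lowest-degree term is regular along the partial diagonals inside $T$ — and after accounting for the Jacobian in blow-up coordinates for $\widetilde{S}$, the order of vanishing of $\widetilde{\Omega}$ along the exceptional divisor is at least $d_{\min}(\gamma)-2$, where $d_{\min}(\gamma)$ is the smallest grade of a nonzero vector of $\frh$-weight $-\gamma$ in $\mathcal{H}_0$. Equivalently, $d^{\widetilde{S}}(\widetilde{\Omega})+1\geq d_{\min}(\gamma)$.

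\emph{The vacuum-module estimate.} It remains to show $d_{\min}(\gamma)\geq\tfrac{1}{2k}(\gamma,\gamma)$. Let $t_\gamma\in\frh$ be the element with $\langle\nu,t_\gamma\rangle=(\nu,\gamma)$ for all $\nu\in\frh^*$, and consider the rank-one Heisenberg subalgebra of $\hfrg$ spanned by the $t_\gamma(n)=t_\gamma\tensor\xi^n$ and $c$; on $\mathcal{H}_0$ one has $[t_\gamma(m),t_\gamma(n)]=k(\gamma,\gamma)\,m\,\delta_{m+n,0}$, and a vector $v$ of $\frh$-weight $-\gamma$ satisfies $t_\gamma(0)v=-(\gamma,\gamma)v$. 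Form the Sugawara operator of this Heisenberg algebra,
\[ \mathcal{L}_0=\tfrac{1}{2k(\gamma,\gamma)}\,t_\gamma(0)^2+\tfrac{1}{k(\gamma,\gamma)}\sum_{m\geq 1}t_\gamma(-m)\,t_\gamma(m). \]
Using the positive-definite contravariant Hermitian form on the integrable module $\mathcal{H}_0$, for which $t_\gamma(-m)$ is adjoint to $t_\gamma(m)$, one gets $\langle\mathcal{L}_0v,v\rangle\geq\tfrac{1}{2k}(\gamma,\gamma)\,\langle v,v\rangle$ for such $v$. On the other hand $L_0-\mathcal{L}_0$ is the Virasoro $L_0$ of the coset of $\hfrg$ by this Heisenberg algebra, and it is $\geq 0$ on $\mathcal{H}_0$, because the coset modules occurring inside the unitary module $\mathcal{H}_0$ are unitary and hence have non-negative conformal weights. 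Taking $v$ homogeneous of grade $d_{\min}(\gamma)$, so $L_0v=d_{\min}(\gamma)v$, gives $d_{\min}(\gamma)\langle v,v\rangle=\langle L_0v,v\rangle\geq\langle\mathcal{L}_0v,v\rangle\geq\tfrac{1}{2k}(\gamma,\gamma)\langle v,v\rangle$, as required.

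\emph{Main obstacle.} The substance of the lemma — the step that really involves $\frg$ beyond the rank-one Heisenberg — is the vacuum-module inequality $d_{\min}(\gamma)\geq\tfrac{1}{2k}(\gamma,\gamma)$, which rests on the comparison $L_0\geq\mathcal{L}_0$, i.e.\ on positivity of the coset Virasoro energy on $\mathcal{H}_0$ (for $\frg=\mathfrak{sl}_2$ this is precisely Ramadas' vanishing theorem, visible there through the lattice realization of $\mathcal{H}_0$). The reduction step is routine given Proposition~\ref{richardthomas} and the degree bookkeeping already established for $(S1)$; the only delicate point there is disposing of the residual poles along partial diagonals inside $T$, for which we use the methods of ~\cite{loo,Var}.
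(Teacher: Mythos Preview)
Your argument is correct, and the reduction step is essentially the paper's: propagate the vacuum at the moving point, expand $\widetilde{\Omega}$ as in Section~\ref{wsm} (not Proposition~\ref{richardthomas}, which is the vanishing criterion rather than the expansion), and read off that a nonzero coefficient at grade $d$ contributes logarithmic degree $d-1$, so $d^{\widetilde{S}}(\widetilde{\Omega})+1\geq d_{\min}(\gamma)$. One small point: the regularity of the lowest-degree term along the partial diagonals inside $T$ is not needed here — the sector expansion of Section~\ref{heisenberg} computes the logarithmic degree regardless of those poles. That regularity is used for the separate estimate $d^{\widetilde{S}}(\widetilde{\Omega})\geq |T|-1$ (equation~\eqref{longrun}), not for the present lemma.

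Where you genuinely diverge from the paper is in the proof of the vacuum-module estimate $d_{\min}(\gamma)\geq\frac{(\gamma,\gamma)}{2k}$. The paper obtains this as the $\lambda=0$ case of Proposition~\ref{richardthomas}, which in turn is an immediate consequence of Kac's weight-norm inequality (\cite{K}, Theorem~12.5(d)): any weight of the integrable module $\mathcal{H}_0$ has affine norm at most that of the highest weight $k\Lambda_0$, giving $(\gamma,\gamma)-2k\,d\leq 0$ for a nonzero vector of $\frh$-weight $-\gamma$ and grade $d$. Your route via the rank-one Heisenberg Sugawara and positivity of the coset $L_0$ is a legitimate alternative; it is more CFT-flavored and avoids citing Kac, at the cost of having to justify that the coset Virasoro has non-negative $L_0$ on $\mathcal{H}_0$. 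This last point does follow from unitarity as you say, but it deserves one more sentence: the vector $v$ of minimal grade and $\frh$-weight $-\gamma$ is automatically annihilated by $t_\gamma(m)$ for $m>0$ (since $t_\gamma(m)$ preserves the $\frh$-weight and lowers the grade), and is then a lowest-weight vector for the coset Virasoro on the space of Heisenberg-singular vectors of weight $-\gamma$; the usual computation $\lVert L_{-1}^{\mathrm{coset}}v\rVert^2=2h\lVert v\rVert^2$ gives $h\geq 0$. Without this observation the phrase ``coset modules are unitary hence have non-negative conformal weights'' is a little quick, since one must first know $L_0^{\mathrm{coset}}$ is bounded below in order to decompose into highest-weight modules.
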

\begin{proof}
For ease in notation let us assume $\beta=\tilde{\beta}$ and drop the assumption that $\beta(a)$ are simple roots.
We will now expand $\Omega$ by a power-series in $u_2=t_2-t_1,\dots, u_L=t_L-t_1$. To apply the considerations of Section ~\ref{wsm} below, write, (by propagation by vacuum introduce the vacuum representation at $z_0=t_1$ and consider the vector (where $\xi_0=z-t_1$, so that $u_i=\xi_0(t_i)$)
$f_{\beta(1)}(-1)|0\rangle$ at that point):
\begin{equation}\label{summand}
\Omega= \sum_{b_2,\dots, b_L} u_2^{-b_2-1}\dots u_L^{-b_L-1} \Omega_{\vec{b}}
\end{equation}
in the angular sector
$0<|u_L|<\dots<|u_2|<\epsilon$ (with $\epsilon$ depending upon $t_1$)
where $\Omega_{\vec{b}}$ equals $$\langle{\Psi}|f_{\beta(L+1)}(t_{L+1})\dots f_{\beta(M)}(t_M)\rho_0(f_{\beta(2)}(\xi_0^{b_2}))\dots \rho_0(f_{\beta(L)}(\xi_0^{b_L}))|f_{\beta(1)}(-1)|0\rangle\tensor|\vec{\lambda}\rangle dt_1du_2\dots du_L.$$

The logarithmic degree of the summand in \eqref{summand} is $-\sum_{a=2}^L b_a$. Suppose that this summand is non-zero. Let $\gamma=\sum_{a=1}^L \beta(a)$, applying Proposition ~\ref{richardthomas}, we find,
$$-1+\sum_{a=2}^L b_a\leq  -\frac{(\gamma,\gamma)}{2k}$$
hence
$$-\sum_{a=2}^L b_a \geq \frac{(\gamma,\gamma)}{2k}-1$$
Note that
$d^S(\Omega)$,  the logarithmic degree of $\Omega$, is (at least) the minimum of  $-\sum_{a=2}^L b_a$ (such that $\rho_0(f_{\beta(2)}(\xi_0^{b_2}))\dots \rho_0(f_{\beta(L)}(\xi_0^{b_L}))|f_{\beta(1)}(-1)|0\rangle\neq 0$). The desired inequality follows (see Section ~\ref{heisenberg}).
\end{proof}

\section{Proof of Theorem ~\ref{Proust} for the (S2) stratum : ``When some points come together at a finite parabolic point''.}

Let $S$ be the set $t_1=\dots=t_L=z_1$. Let $B$ be the completion of the local ring of $(\pone)^M$ along $S$, at its generic point. Then $B=K_S[[u_1,u_2,\dots, u_L]]$ where $K_S$ is the function field of $S$, and $u_i=t_i-z_1$. Clear poles of $\Omega$ at the generic point of $S$ by multiplying:
$$w= \prod_{1\leq a<b\leq L}(t_a-t_b)\prod_{a=1}^L (t_a-z_1)$$
and then
$$w\Omega = g d\vec{t},\ g= \sum_{d\geq d_0} g_d(u_1,\dots, u_L) $$

where $g_d$ is a homogeneous polynomial in the $u$'s of (total) degree $d$ with coefficients in $K_S$, and $d_0$ is the smallest possible degree
(so that $g_{d_0}\neq 0$). Our task is to prove that
$$d^S(\mathcal{R}\Omega)=\bigl(d_0-(L(L-1)/2 + L) +L \bigr) - \sum_{1\leq a<b\leq L}\frac{(\beta(a),\beta(b))}{\kappa} + \sum_{a=1}^L\frac{(\lambda_1,\beta(a))}{\kappa}>0$$
which simplifies to
\begin{equation}\label{inegality1}
d^S(\mathcal{R}\Omega)=\bigl(d_0-L(L-1)/2 \bigr) - \sum_{1\leq a<b\leq L}\frac{(\beta(a),\beta(b))}{\kappa} + \sum_{a=1}^L\frac{(\lambda_1,\beta(a))}{\kappa}>0
\end{equation}
\subsection{Reduction to a case in which ``the lowest degree terms'' have no poles as $t_a$ approaches $t_b$ for $1\leq a<b\leq L$}
 We will reduce (by induction) to the case (but no longer requiring that $\beta(a)$ are simple roots): $g_{d_0}$ is divisible by $(t_1-t_2)$, so that $d_0\geq L(L-1)/2$.  Let us suppose for example that $g_{d_0}$ is not divisible by $(t_1-t_2)$. If $\beta(1)+\beta(2)$ is not a positive root then $\Omega$ does not have a pole along $t_1=t_2$, so automatically $g_{d_0}$ is divisible by $(t_1-t_2)$.

So let us assume that $\beta(1)+\beta(2)$ is a positive root $\delta$.
Now divide $g_d$ by $t_1-t_2=u_1-u_2$ with remainder:
$$g_{d}= r_{d}(u_1,u_3,\dots, u_L) + (u_1-u_2) q_d(u_1,\dots,u_L)$$
and $r_{d_0}\neq 0$.

Let us consider in parallel the new correlation function $\Omega^*$ with variables $t_1,t_3,\dots, t_M$ and $\beta^*(1)=\delta$, along the stratum $S^*$ given by $t_1=t_3=\dots=t_L=z_1$. We multiply
the corresponding correlation function by $$w^*=\prod_{1\leq a<b\leq L, a\neq 2, b\neq 2}(t_a-t_b) \prod_{a=1, a\neq 2}^L (t_a-z_1)$$ write an expansion
$$w^*\Omega^*= \sum_{d\geq \tilde{d}_0} \tilde{g}_d(u_1, u_3,\dots, u_L) dt_1 dt_3 \dots dt_N$$

We know that $$\Omega= dt_2\tensor\frac{\Omega^*}{t_2-t_1} +\hat{\Omega}$$
where $\hat{\Omega}$ is regular at $t_1=t_2$.
 Multiply by $w$, and get
$$w\Omega=dt_2\tensor w^*(t_2-z_1)\prod_{a\geq 3} (t_2-t_a)\Omega^*  +w\hat{\Omega}$$

Note that $w\hat{\Omega}$ does not have poles along the partial diagonals $t_a=t_b$ where $1\leq a< b\leq L$, and equal to $0$ when $u_2=u_1$. Set $u_2=u_1$ (that is, $t_2=t_1$) now and get $r_d(u_1,u_3,\dots, u_L)= (t_1-z_1)\prod_{a>2}(t_1-t_a)\tilde{g}_d(u_1,u_3,\dots,u_L)$. We can therefore conclude that $\tilde{g}_{d_0-(L-1)}\neq 0$. Now if $\tilde{g}_{d_0-(L-1)-s}\neq 0$, then $r_{d_0-s}\neq 0$ which would imply that $g_{d_0-s}\neq 0$. Therefore we conclude

\begin{equation}\label{clef1}
d_0 =\tilde{d_0} +(L-1)
\end{equation}

By a calculation similar to that of equation ~\eqref{G2thing},
we obtain the equality (here $\mathcal{R}^*$ is the new master function corresponding to $\beta^*$ and the variables $t_1,t_3,\dots, t_L$)
\begin{equation}\label{G2thingneq}
d^S(\mathcal{R}\Omega)= d^{S^*}(\mathcal{R}^*\Omega^*) -\frac{(\beta(1),\beta(2))}{\kappa}
\end{equation}
(think of this as ``caused by'' the loss of one variable $t_2$, a polar term $(t_1-t_2)^{-1}$, and a fractional power $(t_1-t_2)^{-\frac{(\beta(1),\beta(2))}{\kappa}}$).
As before, we also have the equality $d^S(\Omega)= d^{S^*}(\Omega^*)$.

We continue this process until we obtain a situation where we cannot reduce any more. So we will have  $\tilde{\beta},T\subseteq A, \widetilde{\Omega}, \widetilde{S}, \widetilde{\mathcal{R}},$ etc, as in Section ~\ref{rememberme} (where $\widetilde{S}$ corresponds to  points $\{t_a:a\in T\}$ equalling $z_1$). Let $\gamma=\sum_{a=1}^L \beta(a)=\sum_{t\in T}\tilde{\beta}(t)$ (as before) and obtain
\begin{equation}\label{beatles300}
d^S(\mathcal{R}\Omega)= d^{\widetilde{S}}(\widetilde{\Omega})+\frac{2(\lambda_1,\gamma)-(\gamma,\gamma)}{2\kappa} +\sum_{a=1}^L \frac{(\beta(a),\beta(a))}{2\kappa}
\end{equation}

We have two estimates for
$d^{\widetilde{S}}(\widetilde{\Omega})$: it is at least $0$ (because there no poles in the ``smallest degree term'' as $t$'s approach each other, but there may be a first order pole in the ``lowest degree term'' as the $t$'s approach $z_1$) and also at least by Lemma ~\ref{basicnew}, $\frac{(\gamma,\gamma)-2(\lambda_1,\gamma)}{2k}$. Assuming that both estimates lead to lower bounds for $d^S(\mathcal{R}\Omega)$, which are $\leq 0$, we conclude that

\begin{equation}\label{nooner10}
-\frac{(\gamma,\gamma)-2(\lambda_1,\gamma)}{2\kappa} +\sum_{a=1}^L \frac{(\beta(a),\beta(a))}{2\kappa}\leq 0
\end{equation}
and
$$\frac{(\gamma,\gamma)-2(\lambda_1,\gamma)}{2k}-\frac{(\gamma,\gamma)-2(\lambda_1,\gamma)}{2\kappa} +\sum_{a=1}^L\frac{(\beta(a),\beta(a))}{2\kappa}\leq 0$$
that is
\begin{equation}\label{harmony}
\frac{g^*\bigl((\gamma,\gamma)-2(\lambda_1,\gamma)\bigr)}{2k\kappa}\leq - \sum_{a=1}^L \frac{(\beta(a),\beta(a))}{2\kappa}
\end{equation}
Inequality ~\eqref{nooner10} implies that the quantity $(\gamma,\gamma)-2(\lambda_1,\gamma)$ is $>0$, while
inequality ~\eqref{harmony} implies that it is $<0$, a contradiction.

\begin{lemma}\label{basicnew}
$d^{\widetilde{S}}(\widetilde{\Omega})\geq \frac{1}{2k}\bigl((\gamma,\gamma)-2(\lambda_1,\gamma)\bigr)$.
\end{lemma}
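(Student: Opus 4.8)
The plan is to prove Lemma \ref{basicnew} by the same power-series technique used for Lemma \ref{basic}, modified to account for the fact that now the $t$-points are colliding at the parabolic point $z_1$ (where the representation $\mh_{\lambda_1}$ sits) rather than at a generic moving point. First I would, as in the proof of Lemma \ref{basic}, reduce notation by assuming $\beta = \tilde\beta$ and dropping the requirement that the $\beta(a)$ be simple roots, so that $\widetilde S$ is the stratum $t_1 = \dots = t_L = z_1$ with $u_i = t_i - z_1$. Rather than introducing a vacuum representation by propagation, here I would expand $\Omega$ directly in the local coordinate $\xi_1 = z - z_1$ at the point $P_1$ itself: the operators $f_{\beta(a)}(t_a)$ for $a \le L$ act through $\rho_1$, and we expand $f_{\beta(a)}(t_a) = \sum_n f_{\beta(a)}(n)\, \xi_1(t_a)^{-n-1}\, d\xi_1(t_a) = \sum_n f_{\beta(a)}(n)\, u_a^{-n-1}\, du_a$ (noting $\xi_1(t_a) = t_a - z_1 = u_a$). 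This writes
\begin{equation}\label{summandS2}
\Omega = \sum_{b_1,\dots,b_L} u_1^{-b_1-1}\dots u_L^{-b_L-1}\, \Omega_{\vec b},
\end{equation}
valid in the angular sector $0 < |u_L| < \dots < |u_1| < \epsilon$, where
$$\Omega_{\vec b} = \langle\Psi|\, f_{\beta(L+1)}(t_{L+1})\dots f_{\beta(M)}(t_M)\, \rho_1\!\bigl(f_{\beta(1)}(b_1)\dots f_{\beta(L)}(b_L)\bigr)|\vec\lambda\rangle\, du_1\dots du_L.$$

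Next I would compute the logarithmic degree of the generic summand of \eqref{summandS2} along $\widetilde S$: it contributes $-\sum_{a=1}^L b_a$ from the powers of the $u_a$ (one more negative power than in the (S1) case, since now $t_1 = z_1$ is also part of the stratum, which is exactly why $d^S$ here starts from $0$ rather than involving the shift by $L-1$). The key input is Proposition \ref{richardthomas}, applied to the power series expansion of the correlation function as the $L$ points $t_1,\dots,t_L$ approach $P_1$: it bounds the order of the pole at $z_1$ in terms of the conformal weights, and the relevant estimate is that if $\rho_1\bigl(f_{\beta(1)}(b_1)\dots f_{\beta(L)}(b_L)\bigr)|\lambda_1\rangle \ne 0$ then $\sum_a b_a \le \Delta_{\lambda_1} - \Delta_{\lambda_1 + \gamma}$ (up to the relevant normalization), where $\gamma = \sum_{a=1}^L \beta(a)$ and $\Delta_\lambda = \frac{(\lambda,\lambda+2\rho)}{2\kappa}$ is the conformal weight. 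The difference $\Delta_{\lambda_1} - \Delta_{\lambda_1+\gamma}$ works out to $\frac{-(\gamma,\gamma) - 2(\gamma,\lambda_1) + 2(\gamma,\lambda_1)}{2\kappa} \cdot(\text{sign})$; carrying the $g^*$-correction into $\kappa = k + g^*$ correctly and matching the normalizations used in Section \ref{heisenberg}, one gets $-\sum_a b_a \ge \frac{1}{2k}\bigl((\gamma,\gamma) - 2(\lambda_1,\gamma)\bigr)$. Since $d^{\widetilde S}(\widetilde\Omega)$ is at least the minimum of $-\sum_a b_a$ over nonzero summands, the desired inequality follows.

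The main obstacle I anticipate is getting the conformal-weight bookkeeping exactly right: one must verify that the pole order of the correlation function at $z_1$ is governed by $\Delta_{\lambda_1}$ minus the weight of whatever descendant state $\rho_1\bigl(\prod f_{\beta(a)}(b_a)\bigr)|\lambda_1\rangle$ lands in, and that this descendant state — being a weight vector of weight $\lambda_1 + \gamma$ — contributes $\Delta_{\lambda_1+\gamma}$, with the subtlety that $\lambda_1 + \gamma$ need not be dominant, so one is using the $L_0$-eigenvalue grading on $\mh_{\lambda_1}$ rather than a weight in $P_k$. This is precisely the computation deferred to Section \ref{heisenberg}, and it parallels the corresponding step in Lemma \ref{basic} (where the vacuum module has $\Delta_0 = 0$, so only the $\Delta_\gamma$-type term with the extra $f_{\beta(1)}(-1)|0\rangle$ insertion appeared); here the nonzero $\Delta_{\lambda_1}$ produces the extra $+2(\lambda_1,\gamma)$ term, which is exactly what makes \eqref{beatles300} and the contradiction argument between \eqref{nooner10} and \eqref{harmony} go through. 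Once the weight computation of Proposition \ref{richardthomas} is in hand, the rest is a direct transcription of the (S1) argument.
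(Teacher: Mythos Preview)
Your overall strategy is exactly the paper's: expand $\widetilde\Omega$ in powers of $u_a = t_a - z_1$ using Section~\ref{wsm}, observe that the logarithmic degree of a nonzero summand is $-\sum_{a=1}^L b_a$, and bound this below via Proposition~\ref{richardthomas}. That part is fine.

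The middle paragraph, however, is muddled and should be cut. Proposition~\ref{richardthomas} does \emph{not} go through conformal weights $\Delta_\lambda = \frac{(\lambda,\lambda+2\rho)}{2\kappa}$; it is a direct consequence of the Kac weight inequality and already has $k$ (not $\kappa$) in the denominator. Applied with $\lambda = \lambda_1$ it gives immediately
\[
\sum_{a=1}^L b_a \ \le\ \frac{2(\lambda_1,\gamma)-(\gamma,\gamma)}{2k},
\]
so no ``$g^*$-correction'' or renormalization is needed. Your intermediate expression $\frac{-(\gamma,\gamma)-2(\gamma,\lambda_1)+2(\gamma,\lambda_1)}{2\kappa}$ collapses to $\frac{-(\gamma,\gamma)}{2\kappa}$, which is neither $\Delta_{\lambda_1}-\Delta_{\lambda_1\pm\gamma}$ nor the desired bound; and the descendant state has $\frh$-weight $\lambda_1 - \gamma$, not $\lambda_1+\gamma$, since each $f_{\beta(a)}\in\frg_{-\beta(a)}$ lowers the weight. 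Once you apply Proposition~\ref{richardthomas} directly, the conclusion $d^{\widetilde S}(\widetilde\Omega)\ge \frac{1}{2k}\bigl((\gamma,\gamma)-2(\lambda_1,\gamma)\bigr)$ follows in one line, exactly as in the paper.
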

\begin{proof}
For ease in notation let us assume that $\beta=\tilde{\beta}$ and drop the assumption that $\beta(a)$ are simple roots. The proof parallels that of Lemma ~\ref{basic}.
 Expand $\Omega=\widetilde{\Omega}$ by a power-series in $u_1=t_1-z_1,\dots, u_L= t_L-z_1$ (on a suitable angular sector). To apply the considerations of Section ~\ref{wsm} below, write
\begin{equation}\label{summand1}
\Omega= \sum_{b_1,\dots, b_L} u_1^{-b_1-1}\dots u_L^{-b_L-1} \Omega_{\vec{b}}
\end{equation}
where $\Omega_{\vec{b}}$ equals $$\langle{\Psi}|f_{\beta(L+1)}(t_{L+1})\dots f_{\beta(M)}(t_M)\rho_1(f_{\beta(1)}(b_1)\dots \rho_1(f_{\beta(L)}(b_L))|\vec{\lambda_1}\rangle dt_1dt_2\dots dt_L.$$

The logarithmic degree of the summand (assumed to be non-vanishing) in \eqref{summand1} is $-\sum_{a=1}^L b_a$. Let $\gamma=\sum_{a=1}^L \beta(a)$, applying Proposition ~\ref{richardthomas}, we find (where as above, $d^S(\Omega)$ is the logarithmic degree of $\Omega$), then
$$\sum_{a=1}^L  b_a\leq  \frac{2(\lambda_1,\gamma)-(\gamma,\gamma)}{2k} $$
and hence,
$$d^S(\Omega)\geq   \frac{(\gamma,\gamma)-2(\lambda_1,\gamma)}{2k}.$$
\end{proof}
\section{Proof of Theorem ~\ref{Proust} for the (S3) stratum: ``When some points come together at  infinity''.}\label{deligne}

Let $S$ be the set $t_1=\dots=t_L=\infty$. Let $B$ be the completion of the local ring of $(\pone)^M$ along $S$, at its generic point. Then $B=K_S[[u_1,u_2,\dots, u_L]]$ where $K_S$ is the function field of $S$, and $u_i=\frac{1}{t_i}$.
We will use the coordinate $u=\frac{1}{z}$ at infinity
 Clear poles of $\Omega$ at the generic point of $S$ by multiplying:
$$\Omega'= w \Omega,\  w= \prod_{1\leq a<b\leq L}(u_a-u_b)$$
and then
$$\Omega' = g d\vec{u},\ g= \sum_{d\geq d_0} g_d(u_1,\dots, u_L) $$

where $g_d$ is a homogeneous polynomial in the $u$'s of (total) degree $d$ with coefficients in $K_S$, and $d_0$ is the smallest possible degree
(so that $g_{d_0}\neq 0$). Note that the form $\Omega$ is holomorphic in each of its variables at infinity.
Our task is to prove
\begin{equation}\label{inegality11}
d^S(\mathcal{R}\Omega)=\bigl(d_0-L(L-1)/2 +L\bigr) - \sum_{1\leq a<b\leq L}\frac{(\beta(a),\beta(b))}{\kappa} - \sum_{a=1}^L\frac{(\beta(a),\beta(a))}{\kappa}>0
\end{equation}
In the above, we have used the fact that the functions $(t_a-t_b)$ and $(t_a-z_i)$  have poles of order $1$ at infinity in $t_a$, so the master function acquires an additional pole at infinity in each variable $t_a$  of order $\frac{1}{\kappa}$ times the quantity
$$\sum_{i=1}^N (\lambda_i,\beta(a))-\sum_{b\neq a} (\beta(a), \beta(b))= (\sum_{i=1}^N  \lambda_i-\sum_{b=1}^M \beta(b),\beta(a)) +
(\beta(a),\beta(a))= (\beta(a),\beta(a)).$$
\subsection{Reduction to a case in which ``the lowest degree terms'' have no poles as $t_a$ approaches $t_b$ for $1\leq a<b\leq L$}
 Let us suppose for example that $g_{d_0}$ is not divisible by $(u_1-u_2)$. If $\beta(1)+\beta(2)$ is not a positive root then $\Omega$ does not have a pole along $t_1=t_2$, so automatically $g_{d_0}$ is divisible by $(u_1-u_2)$.

So let us assume that $\beta(1)+\beta(2)$ is a positive root $\delta$. Now divide $g_d$ by $(u_1-u_2)$ with remainder:
$$g_{d}= r_{d}(u_2,\dots, u_L) + (u_1-u_2) q_d(u_1,\dots,u_L)$$
and $r_{d_0}\neq 0$.

Let us consider in parallel the new correlation function $\Omega^*$ with variables $t_1,t_3,\dots, t_M$ and $\beta^*(1)=\delta$, $\beta^*(a)=\beta(a), a>2$, along the stratum $S^*$ given by $t_1=t_3=\dots=t_L=\infty$. We multiply
the corresponding correlation function by $$w^*=\prod_{1\leq a<b\leq L, a\neq 2, b\neq 2}(u_a-u_b)$$ write an expansion
$$w^*\Omega^*= \sum_{d\geq \tilde{d}_0} \tilde{g}_d(u_1, u_3,\dots, u_L) du_1 du_3 \dots du_N$$

We know that $$\Omega= du_2\tensor\frac{\Omega^*}{u_2-u_1} +\hat{\Omega}$$
where $\hat{\Omega}$ is regular at $u_1=u_2$.
 Multiply by $w$, and get
$$w\Omega=du_2\tensor w^*\prod_{a\geq 3}  (u_1-u_a)\Omega^*  +w\hat{\Omega}$$

Set $u_1-u_2=0$ now and get $r_d(u_1,u_3,\dots, u_L)= \tilde{g}_d(u_1,u_3,\dots,u_L) \prod_{a>2}(u_1-u_a)$. We can therefore conclude that $\tilde{g}_{d_0-(L-2)}\neq 0$. Now if $\tilde{g}_{d_0-(L-2)-s}\neq 0$, then $r_{d_0-s}\neq 0$ which would imply that $g_{d_0-s}\neq 0$. Therefore we conclude

\begin{equation}\label{clef12}
d_0 =\tilde{d_0} +(L-2)
\end{equation}

We calculate,
$$d^S(\mathcal{R}\Omega)-d^{S^*}(\mathcal{R}^*\Omega^*)=(L-2)+ (L-1)(L-2)/2 - L(L-1)/2 + 1 + \frac{(\beta(1),\beta(2))}{\kappa}$$
$$= \frac{(\beta(1),\beta(2))}{\kappa},$$
and
$$d^S(\Omega)=d^{S^*}(\Omega^*)$$

We continue this process until we obtain a situation where we cannot reduce any more. So we will have  $\tilde{\beta},T\subseteq A, \widetilde{\Omega}, \widetilde{S}, \widetilde{\mathcal{R}},$ etc, as in Section ~\ref{rememberme} (where $\widetilde{S}$ corresponds to points   $\{t_a:a\in T\}$ equalling infinity).

\begin{equation}\label{beatles1}
d^S(\mathcal{R}\Omega)= d^{\widetilde{S}}(\widetilde{\Omega}) - \sum_{1\leq a<b\leq L}\frac{(\beta(a),\beta(b))}{\kappa} - \sum_{a=1}^L\frac{(\beta(a),\beta(a))}{\kappa}
\end{equation}
Introduce, as before  $\gamma=\sum_{a=1}^L \beta(a)=\sum_{t\in T}\tilde{\beta}(t)$, and use equation ~\eqref{phillips} and the
equality
$$
\sum_{t\in T}\sum_{a\in \neww(t)}\frac{(\beta(a),\beta(a))}{2\kappa}=\sum _{a=1}^L \frac{(\beta(a),\beta(a))}{2\kappa}
$$
 to obtain the following expression which is better suited for the final problem:
\begin{equation}\label{beatles3}
d^S(\mathcal{R}\Omega)= d^{\widetilde{S}}(\widetilde{\Omega})-\frac{(\gamma,\gamma)}{2\kappa} -\sum_{t\in T}\sum_{a\in \neww(t)}\frac{(\beta(a),\beta(a))}{2\kappa}
\end{equation}
We have two estimates for
$d^{\widetilde{S}}(\widetilde{\Omega})$: it is at least $|T|$ (because the ``lowest'' degree term is divisible by all pairwise differences, compare with ~\eqref{longrun})and also at least $\frac{(\gamma,\gamma)}{2k}$ (this corresponds to $\lambda_1=0$ in Lemma ~\ref{basicnew}). Assuming that both estimates lead to lower bounds for $d^S(\mathcal{R}\Omega)$, which are $\leq 0$, we conclude that

\begin{equation}\label{nooner}
|T| -\frac{(\gamma,\gamma)}{2\kappa} -\sum_{t\in T}\sum_{a\in \neww(t)}\frac{(\beta(a),\beta(a))}{2\kappa}\leq 0
\end{equation}
and
$$\frac{(\gamma,\gamma)}{2k}-\frac{(\gamma,\gamma)}{2\kappa} -\sum_{t\in T}\sum_{a\in \neww(t)}\frac{(\beta(a),\beta(a))}{2\kappa}\leq 0$$
that is
$$\frac{g^*(\gamma,\gamma)}{2k\kappa}\leq \sum_{t\in T}\sum_{a\in \neww(t)}\frac{(\beta(a),\beta(a))}{2\kappa}$$
or that
$$\frac{(\gamma,\gamma)}{2\kappa}\leq \frac{k}{g^*}\sum_{t\in T}\sum_{a\in \neww(t)}\frac{(\beta(a),\beta(a))}{2\kappa}$$

which gives (using inequality ~\eqref{nooner}) the inequality

$$|T| \leq \bigl(\frac{k+g^*}{g^*}\bigr)\sum_{t\in T}\sum_{a\in \neww(t)}\frac{(\beta(a),\beta(a))}{2\kappa}=\sum_{t\in T}\sum_{a\in \neww(t)}\frac{(\beta(a),\beta(a))}{2g^*}$$

It turns out that for every $t\in T$, the summand $\sum_{a\in \neww(t)}\frac{(\beta(a),\beta(a))}{2g^*}$ on the right hand side is $<1$, leading to a contradiction. More precisely,
\begin{lemma}\label{basiclemma}
Let $\delta$ be a positive root and $\delta=\sum_{i=1}^s \delta_i$ where $\delta_i,\ 1=1,\dots,s$ are simple roots (possibly repeated). Then $\sum_{i=1}^s(\delta_i,\delta_i)<2g^*$.
\end{lemma}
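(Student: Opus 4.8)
The plan is to bound $\sum_{i=1}^s(\delta_i,\delta_i)$ by the analogous quantity attached to the highest root $\theta$, and then to identify that quantity with $2g^*-2$ through the standard Casimir computation of the dual Coxeter number. First I would note that, since each $\delta_i$ is a simple root, collecting equal terms rewrites $\delta=\sum_p n_p\alpha_p$ (the unique expansion of $\delta$ into simple roots), where $n_p$ counts the indices $i$ with $\delta_i=\alpha_p$; consequently $s=\sum_p n_p=\operatorname{ht}(\delta)$ and $\sum_{i=1}^s(\delta_i,\delta_i)=\sum_p n_p(\alpha_p,\alpha_p)$. Thus the statement to be proved is $\sum_p n_p(\alpha_p,\alpha_p)<2g^*$.

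Next I would record two classical ingredients. (i) For each simple root $\alpha_p$ one has $(\rho,\alpha_p)=\tfrac12(\alpha_p,\alpha_p)$, because $\langle\rho,\alpha_p^\vee\rangle=1$; writing $\theta=\sum_p m_p\alpha_p$ this gives $(\theta,2\rho)=\sum_p m_p(\alpha_p,\alpha_p)$. Combining this with the normalization $(\theta,\theta)=2$ and the identity $2g^*=(\theta,\theta+2\rho)$ — the eigenvalue of the Casimir operator on the adjoint representation $V_\theta$ in this normalization — one obtains
\[
2g^*=2+\sum_p m_p(\alpha_p,\alpha_p).
\]
(ii) Since $\theta$ is the highest root, $\theta-\delta$ is a non-negative integral combination of simple roots (see, e.g., \cite{bourbaki}), so $n_p\le m_p$ for every $p$.

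Putting these together,
\[
\sum_{i=1}^s(\delta_i,\delta_i)=\sum_p n_p(\alpha_p,\alpha_p)\le\sum_p m_p(\alpha_p,\alpha_p)=2g^*-2<2g^*,
\]
which is the lemma, in fact with the sharper estimate $2g^*-2$ (attained, for instance, by $\theta=3\alpha_1+2\alpha_2$ in $G_2$, where $2g^*-2=6$).

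The one place that calls for care is the choice of estimate rather than any hard computation: the naive bound $\sum_i(\delta_i,\delta_i)\le 2\operatorname{ht}(\delta)\le 2\operatorname{ht}(\theta)=2(h-1)$, where $h$ is the Coxeter number, is already too crude in the non-simply-laced cases, where $g^*<h$ and $2(h-1)$ need not be $<2g^*$. What rescues the inequality is that a root of large height must be built mostly out of short simple roots; weighting the coefficients $n_p$ by $(\alpha_p,\alpha_p)$ — exactly as the formula for $2g^*$ in (i) prescribes — encodes this, uniformly and with no case analysis.
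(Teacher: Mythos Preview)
Your proof is correct and follows the same overall strategy as the paper's: reduce to the highest root $\theta$ via $n_p\le m_p$, and then establish the identity $\sum_p m_p(\alpha_p,\alpha_p)=2(g^*-1)$. The difference lies in how that identity is obtained. The paper simply appeals to the tables in \cite{bourbaki} and verifies it case by case (with a parenthetical remark that in the ADE case it is immediate since all root lengths squared equal $2$ and $h=g^*$). You instead derive the identity uniformly from two standard facts: $(\rho,\alpha_p)=\tfrac12(\alpha_p,\alpha_p)$ for simple $\alpha_p$, and the Casimir eigenvalue $2g^*=(\theta,\theta+2\rho)$ on the adjoint representation. Your route is more conceptual and self-contained, avoiding any type-by-type verification; the paper's route is shorter on the page but offloads the work to the tables. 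Your closing paragraph, explaining why the naive height bound $2(h-1)$ fails in the non-simply-laced case and why the $(\alpha_p,\alpha_p)$-weighted version succeeds, is a nice addition not present in the paper.
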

\begin{proof}
The statement  reduces to $\delta=\theta$, the highest
root. Looking at the tables in ~\cite{bourbaki}, we can verify  that if $\theta=\sum b_i\delta_i$ where $\delta_i$ are simple roots (without  repetitions), then
$\sum b_i(\delta_i,\delta_i)= (g^*-1)(\theta,\theta)=2(g^*-1)$. (In the ADE case, the Coxeter number, which is  the same as the dual Coxeter number, equals $\sum b_i+1$. Also, in this case $(\alpha,\alpha)=2$ for all roots, so one has a simpler proof.)
\end{proof}
\section{Power series expansions}
\subsection{Some generalities}\label{heisenberg}
Let $f(z_1,\dots,z_n)$ be a meromorphic function defined in a neighborhood of the origin $0$ in $\Bbb{C}^n$. Assume that $\prod_i z_i \prod_{i<j}(z_i-z_j)f$ is holomorphic at the origin.

The multiplicity of $f$ along a partial diagonal $z_1=\dots=z_s=0$
can be calculated as follows.  We can develop $f$ in a power series as follows: $$f=\frac{\prod_i z_i\prod_{i<j}(z_i-z_j)f}{\prod_i z_i \prod_{i<j}(z_i-z_j)}$$ and then develop the terms ($1\leq i<j\leq s$ on $|z_i|<|z_j|$) $$\frac{1}{z_i-z_j}=-\frac{z_j^{-1}}{1-(z_i/z_j)}$$ in  power series in $z_i/z_j$.

Write
$$f(z_1,\dots, z_N)=\sum_{\vec{b}} z^{\vec{b}} g_{\vec{b}}(z_{s+1},\dots, z_N).$$
where
\begin{enumerate}
\item The summation runs through vectors $\vec{b}=(b_1,\dots,b_s)\in \Bbb{Z}^s$.
\item $z^{\vec{b}}=z_1^{b_1}z_2^{b_2} \dots z_s^{b_s}$.
\item $g_{\vec{b}}(z_{s+1},\dots, z_N)$ is meromorphic with poles only along the partial diagonals.
\item The expansion is valid on $0<|z_1|<|z_2|<\dots<|z_s|<\epsilon$, $|z_{j}-z_{j,0}|<\epsilon'$ for $j=s+1,\dots, N$, and some initial point $(z_{s+1,0}, \dots, z_{N,0})$ (not on any partial diagonals) and positive real numbers $\epsilon$ and $\epsilon'$.
\end{enumerate}
Write $|\vec{b}|=b_1+b_2+\dots + b_s$. The following proposition is immediate.
\begin{proposition}
The multiplicity of  $f$ along $z_1=\dots=z_s=0$ is the minimum value of the set $\{|\vec{b}|: g_{\vec{b}}\neq 0\}$.
\end{proposition}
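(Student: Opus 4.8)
The plan is to extract the multiplicity straight off a single chart of the blow-up. Recall that the multiplicity of $f$ along the linear subspace $S=\{z_1=\dots=z_s=0\}$ is the order of vanishing along the exceptional divisor $E$ of $\pi^*f$, where $\pi$ is the blow-up of $\mathbb{C}^n$ along $S$, computed at the generic point of $E$ (equivalently of $S$). Since $S$ is linear the blow-up is covered by $s$ coordinate charts; I would work in the chart with coordinates $(w_1,\dots,w_{s-1},z_s,z_{s+1},\dots,z_n)$ in which $\pi^*z_i=w_iz_s$ for $1\le i\le s-1$, $\pi^*z_s=z_s$, $\pi^*z_j=z_j$ for $j>s$, and $E$ is cut out by $z_s=0$. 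A generic point of $E$ has $z_s=0$, the $w_i$ arbitrary, and $z_{s+1},\dots,z_n$ lying off every partial diagonal, so that each $g_{\vec b}$ with $g_{\vec b}\not\equiv 0$ is holomorphic and nonvanishing near it.

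Next I would pull back the expansion. A monomial pulls back by $\pi^*(z_1^{b_1}\cdots z_s^{b_s})=w_1^{b_1}\cdots w_{s-1}^{b_{s-1}}\,z_s^{|\vec b|}$, so, on the region $0<|w_1|<\dots<|w_{s-1}|<1$, $0<|z_s|<\epsilon$ (which lies over the angular sector where the expansion of $f$ is valid, and which accumulates on $E$), one has
$$\pi^*f=\sum_{\vec b}w_1^{b_1}\cdots w_{s-1}^{b_{s-1}}\,z_s^{|\vec b|}\,g_{\vec b}(z_{s+1},\dots,z_n).$$
The exponents $|\vec b|$ occurring here are bounded below (clear poles: $f$ is holomorphic divided by $\prod_{i\le s}z_i\prod_{i<j\le s}(z_i-z_j)$ times units), so $m:=\min\{|\vec b|:g_{\vec b}\neq0\}$ exists. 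Grouping the series by powers of $z_s$, the lowest-order part of $\pi^*f$ is $z_s^m\,\Phi$ with $\Phi=\sum_{|\vec b|=m}w_1^{b_1}\cdots w_{s-1}^{b_{s-1}}\,g_{\vec b}$, a function of the $w_i$ and of $z_{s+1},\dots,z_n$ only.

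It then remains to see that $\Phi$ does not vanish identically on $E$, for then $\operatorname{ord}_E(\pi^*f)=m$, which is the assertion. This is where the only real content lies. For the fixed total degree $m$, the vectors $\vec b$ contributing to $\Phi$ are pairwise distinct under $\vec b\mapsto(b_1,\dots,b_{s-1})$, since $b_s=m-(b_1+\dots+b_{s-1})$ is then determined; hence the monomials $w_1^{b_1}\cdots w_{s-1}^{b_{s-1}}$ occurring in $\Phi$ are pairwise distinct, and no cancellation among them is possible. Since by the definition of $m$ at least one coefficient $g_{\vec b}$ in $\Phi$ is $\not\equiv 0$---hence nonvanishing at the generic point of $E$---we get $\Phi\not\equiv 0$. (The sum defining $\Phi$ may be infinite, but it converges on the region above, so this is harmless; in particular one need not bound the number of $\vec b$ with $|\vec b|=m$ and $g_{\vec b}\neq 0$.)

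The main---essentially the only---obstacle is the non-cancellation step just described: one must check that passing to the blow-up genuinely separates the lowest-order terms of $f$, which is precisely what the $\mathbb{P}^{s-1}$ of directions along $E$, recorded by the distinct monomials $w_1^{b_1}\cdots w_{s-1}^{b_{s-1}}$, accomplishes. Everything else is a routine unwinding of the definition of multiplicity via blow-up together with the form of the expansion of $f$ already set up.
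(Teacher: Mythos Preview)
Your argument is correct. The paper itself offers no proof of this proposition: it simply declares ``The following proposition is immediate.'' Your unwinding via a single affine chart of the blow-up along $S$ is exactly the natural way to make ``immediate'' precise, and your attention to the one genuine issue---that for fixed $|\vec b|=m$ there may be infinitely many contributing $\vec b$, so that $\Phi$ is an honest Laurent series in the $w_i$ whose nonvanishing must be argued via distinctness of monomials and nonvanishing of at least one coefficient $g_{\vec b}$---is well placed. If one wanted to be maximally careful one would note that the regrouping by powers of $z_s$ is justified by integrating the (uniformly convergent on $|z_s|=r$) pulled-back series against $z_s^{-k-1}\,dz_s$, which identifies $\sum_{|\vec b|=k} w^{\,\cdot}\,g_{\vec b}$ with the $k$th Laurent coefficient of the meromorphic function $\pi^*f$ near a generic point of $E$; but this is standard and implicit in your parenthetical remark.
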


\subsection {Recollections from vertex operator algebra theory and Kac-Moody algebras}\label{wsm}

Conformal field theory  gives a expansion of the correlation function about any point. The expansion of
the  correlation function (with notation as in Section ~\ref{corro})
$$\omega=\langle\Psi|X_1(Q_1)\dots X_M(Q_M)|\Phi\rangle$$
about $Q_1=\dots=Q_L=P_1$ is, using the coordinate $\xi_1$ (the answer is an element
in $\Bbb{C}((u_1))(((u_{2}))\dots((u_L))$ where $u_i=\xi_1(Q_i)$)  an expression,
valid in the region $0<|u_L|<|u_{L-1}|<\dots<|u_1|<\epsilon$ for some $\epsilon$ (and other variables staying close to initial points), of the form
\begin{proposition}
$$\Omega= \sum_{b_1,\dots, b_L}  \omega_{\vec{b}}u_1^{-b_1-1}\dots u_L^{-b_L-1}$$
with
$$\omega_{\vec{b}}=\langle{\Psi}|X_{L+1}(Q_{L+1})\dots X_M(Q_M)|\rho_1(X_{1}(\xi_1^{b_1}))\dots \rho_1(X_L(\xi_1^{b_L}))|\Phi\rangle d\vec{u}$$
where $d\vec{u}=du_1\dots du_L$.
\end{proposition}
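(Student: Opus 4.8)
The plan is to deduce this from the basic Ward identities of \cite{TK,TUY,Ueno} by peeling off the points $Q_1,\dots,Q_L$ one at a time, beginning with the one nearest $P_1$. First, by propagation of vacua applied at $Q_1,\dots,Q_M$, I would identify $\Omega$ with the value of a single conformal block $\langle\widehat{\Psi}|$ on the curve $(\Bbb{P}^1;P_1,\dots,P_N,Q_1,\dots,Q_M)$ on the vector $\bigotimes_{a=1}^M\bigl(X_a(-1)|0\rangle\bigr)\otimes|\Phi\rangle$; in particular, inserting the current $X_a$ at a point with coordinate $z$ is, by definition, placing $X_a(-1)|0\rangle$ at that point, and the associated object already carries the one-form $dz$. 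The problem is thereby reduced to the behaviour of the correlation function when a variable insertion point runs through a small punctured disc about $P_1$.

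The case $L=1$ is the basic Ward identity: for the other points fixed, $z\mapsto\langle\widehat{\Psi}|\,X_1(z)\,(\cdots)$ is a rational one-form on $\Bbb{P}^1$ whose only poles lie at the $P_i$ and the other $Q_b$, and whose Laurent expansion at any such point is the generating series of the corresponding mode insertions on the module attached there (this follows by applying the gauge condition on the enlarged curve to rational functions with prescribed principal parts, as on p.~70 of \cite{Ueno}). Expanding at $P_1$ in the coordinate $\xi_1$ and setting $z$ equal to the coordinate $u_1=\xi_1(Q_1)$, one obtains, on a punctured disc $0<|u_1|<\epsilon$ avoiding the remaining points,
$$\langle\widehat{\Psi}|\,X_1(Q_1)\,(\cdots)=\sum_{b_1\in\Bbb{Z}}\Bigl(\langle\widehat{\Psi}|\,\rho_1\bigl(X_1\otimes\xi_1^{b_1}\bigr)\,(\cdots)\Bigr)\,u_1^{-b_1-1}\,du_1,$$
which is the asserted formula for $L=1$ (the negative powers of $u_1$ are finite in number because the positive modes eventually annihilate any fixed vector of $\mathcal{H}_{\lambda_1}$).

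For $L>1$ I would argue by induction, bringing in $Q_L,Q_{L-1},\dots,Q_1$ in that order. After $Q_L,\dots,Q_{\ell+1}$ have been expanded, $Q_\ell$ lies in an annular neighbourhood of $P_1$ interior to those already used, so the $L=1$ step applies verbatim with $Q_\ell$ as the moving variable; since the modes $\rho_1(X_{\ell+1}\otimes\xi_1^{b_{\ell+1}}),\dots,\rho_1(X_L\otimes\xi_1^{b_L})$ are already inserted into the $\mathcal{H}_{\lambda_1}$-slot to the right of where $\rho_1(X_\ell\otimes\xi_1^{b_\ell})$ now lands, the composite that appears is exactly $\rho_1(X_1(\xi_1^{b_1}))\cdots\rho_1(X_L(\xi_1^{b_L}))|\Phi\rangle$ as dictated by the radial ordering $0<|u_L|<\cdots<|u_1|<\epsilon$, and summing over $\vec{b}$ yields the stated element of $\Bbb{C}((u_1))((u_2))\cdots((u_L))$. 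The step I expect to require the most care is precisely this iteration: one has to check that the nested annuli are mutually compatible and that the partial-fraction expansions compose so as to produce the mode operators in exactly the order forced by $|u_L|<\cdots<|u_1|$; the organization of this — together with the fact that, for the $z_i$ fixed, the correlation function is rational in each insertion point, so the expansions genuinely converge on the indicated regions — is carried out in \cite{TK,TUY}, which I would invoke rather than reprove.
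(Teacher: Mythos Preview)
Your proposal is correct and follows essentially the same route as the paper. The paper first notes that $\Omega$ becomes holomorphic near $0\in\Bbb{C}^L$ after multiplying by $\prod_i u_i\prod_{i<j}(u_i-u_j)$, so an expansion of the asserted shape exists, and then identifies each coefficient $\omega_{\vec{b}}$ by taking iterated residues $\Res_{u_L=0},\Res_{u_{L-1}=0},\dots$, invoking Theorem~3.24(3) of \cite{Ueno} at each step; this is exactly your inductive peeling via the Ward identity, in the same order $Q_L,Q_{L-1},\dots,Q_1$.
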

\begin{proof}
We know that multiplying $\Omega$ by a (suitable power of)
$\prod_{i=1}^L u_i\prod_{1\leq i<j\leq L}(u_i-u_j),$ one gets a form holomorphic in $(u_1,\dots, u_L)$ (in a neighborhood of $0\in \Bbb{C}^L$). So such an expansion of $\Omega$ exists. To find the values of $\omega_{\vec{b}}$, we take iterated residues. More precisely by Theorem 3.24 (3) in ~\cite{Ueno}, we have
$$\Res_{u_L=0} u_L^{s} \langle\Psi|X_1(Q_1)\dots X_L(Q_L)X_{L+1}(Q_{L+1})\dots X_M(Q_M)|\Phi\rangle=$$
$$ \langle\Psi|X_1(Q_1)\dots X_{L-1}(Q_{L-1})X_{L+1}(Q_{L+1})\dots X_M(Q_M)|\rho_1(X_L(\xi_1^s))|\Phi\rangle$$
and we may iterate this procedure.
\end{proof}

\begin{remark}\label{silence}
\begin{enumerate}
\item Note that if $|\Phi\rangle=|\Phi_1\rangle\tensor\dots\tensor |\Phi_N\rangle$, then
$$\rho_1(X_{1}(\xi_1^{b_1}))\dots \rho_1(X_L(\xi_1^{b_L}))|\Phi\rangle=(X_1(b_1)\dots X_L(b_L)|\Phi_1\rangle)\tensor|\Phi_2\rangle\dots\tensor |\Phi_N\rangle.$$
We may hence rewrite the expression for $\omega$ as
$$\sum_{b_1,\dots, b_L}  \langle{\Psi}|X_{L+1}(Q_{L+1})\dots X_M(Q_M)|(X_{1}(b_1)\dots X_L(b_L)|\Phi_1\rangle)\tensor|\Phi_2\rangle\tensor \dots |\Phi_N\rangle u_1^{-b_1-1}\dots u_L^{-b_L-1} d\vec{u}$$
\item The above expansion is valid for any choice of $|\Phi_i\rangle\in \mathcal{H}_{\lambda_i}$ (not necessarily in $V_{\lambda_i}$). In one of its applications (Lemma ~\ref{basic}) we will
propagate vacua and let $P_1$ be the new point (after renaming the parabolic points), and consider expansions with $|\Phi_1\rangle= f_{\delta}(-1)|0\rangle$ for a suitable positive root $\delta$.
\end{enumerate}
\end{remark}

\subsection{Vanishing criteria} In a finite dimensional irreducible representation $V_{\lambda}$ of a simple Lie algebra $\frg$ with highest weight $\lambda$, one knows that for any  weight $\lambda'$ occurring in $V_{\lambda}$, $(\lambda',\lambda')\leq (\lambda,\lambda)$. There is an analogous fact for representations of Kac-Moody algebras (see ~\cite{K}, Chapters 7 and 12). Using these results, we now formulate conditions under which an expression of the form
$X_1(b_1)\dots X_L(b_L)|\lambda\rangle$, where $|\lambda\rangle$ is a highest weight vector in a integrable highest weight representation $\mh_{\lambda}$ of $\hat{\frg}$ of level $k$, is necessarily zero.
\begin{proposition}\label{richardthomas}
Suppose that $X_a =f_{\beta(a)} \in \frg_{-\beta(a)}$, where $\beta(a)$ are positive roots (not necessarily simple) and set $\gamma=\sum_{a=1}^L\beta(a)$.
If $X_1(b_1)\dots X_L(b_L)|\lambda\rangle\neq 0$ then
\begin{equation}\label{prakash}
\sum_{a=1}^L b_a\leq \frac{2(\lambda,\gamma)-(\gamma,\gamma)}{2k}.
\end{equation}
\end{proposition}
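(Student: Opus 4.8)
The plan is to recognize the statement as the affine analogue of the elementary fact, quoted just above, that in a finite dimensional irreducible $\frg$-module $V_\lambda$ every weight $\lambda'$ satisfies $(\lambda',\lambda')\le(\lambda,\lambda)$, now applied to $\mh_\lambda$ viewed as an integrable highest weight module over the affine Kac--Moody algebra. First I would adjoin to $\hat{\frg}$ the degree derivation $d$, normalized so that $[d,X(m)]=mX(m)$ on $\hat{\frg}$ and $d|\lambda\rangle=0$; then $d$ acts semisimply on $\mh_\lambda$ (realizing the energy grading, with $|\lambda\rangle$ at the top), and $\mh_\lambda\cong L(\hat\lambda)$ with $\hat\lambda=\lambda+k\Lambda_0$, where in $\hat{\frh}^*=\frh^*\oplus\Bbb{C}\Lambda_0\oplus\Bbb{C}\delta$ the element $\Lambda_0$ is dual to the central element $c$ and $\delta$ is dual to $d$. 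Since $\lambda\in P_k$, the weight $\hat\lambda$ is dominant integral: $\langle\hat\lambda,\alpha_i^\vee\rangle=\langle\lambda,\alpha_i^\vee\rangle\ge 0$ for the finite simple coroots, and $\langle\hat\lambda,\alpha_0^\vee\rangle=k-(\lambda,\theta)\ge 0$. I would also record that the standard invariant form on $\hat{\frh}^*$ extends $(\,,\,)$ on $\frh^*$ and satisfies $(\frh^*,\Lambda_0)=(\frh^*,\delta)=(\Lambda_0,\Lambda_0)=(\delta,\delta)=0$ and $(\Lambda_0,\delta)=1$.

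Next comes a short bookkeeping computation of the affine weight of $v=X_1(b_1)\cdots X_L(b_L)|\lambda\rangle$. Each operator $X_a(b_a)=f_{\beta(a)}\otimes\xi^{b_a}$ has $\frh^*$-weight $-\beta(a)$, commutes with $c$, and satisfies $[d,X_a(b_a)]=b_aX_a(b_a)$, so it carries affine weight $-\beta(a)+b_a\delta$. Hence, assuming $v\ne 0$, the vector $v$ spans a nonzero weight space of $L(\hat\lambda)$ of weight $\hat\mu=\hat\lambda-\gamma+n\delta=(\lambda-\gamma)+k\Lambda_0+n\delta$, where $\gamma=\sum_a\beta(a)$ and $n=\sum_{a=1}^L b_a$. (The only point requiring care here is the sign conventions for $d$ and $\delta$; note that the final conclusion is unaffected by shifting the $\delta$-component of $\hat\lambda$ by a constant, since that changes $(\hat\lambda,\hat\lambda)$ and $(\hat\mu,\hat\mu)$ by the same amount.)

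The key input is the affine version of the norm inequality: for $\hat\lambda$ dominant integral, every weight $\hat\mu$ of $L(\hat\lambda)$ satisfies $(\hat\mu,\hat\mu)\le(\hat\lambda,\hat\lambda)$ (see ~\cite{K}, Chapters 7 and 11--12). For completeness I would recall the short proof: by integrability the affine Weyl group $\hat W$ permutes the weight spaces of $L(\hat\lambda)$, so we may replace $\hat\mu$ by a dominant $\hat W$-conjugate without changing its norm; writing $\hat\lambda-\hat\mu=\sum_i c_i\alpha_i$ with all $c_i\ge 0$ and using that $\hat\lambda+\hat\mu$ is dominant while $(\alpha_i,\alpha_i)>0$ for every affine simple root, one gets $(\hat\lambda,\hat\lambda)-(\hat\mu,\hat\mu)=\sum_i c_i(\alpha_i,\hat\lambda+\hat\mu)\ge 0$. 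Substituting the weights computed above, $(\hat\mu,\hat\mu)=(\lambda-\gamma,\lambda-\gamma)+2kn$ and $(\hat\lambda,\hat\lambda)=(\lambda,\lambda)$, so $(\hat\mu,\hat\mu)\le(\hat\lambda,\hat\lambda)$ becomes $(\lambda,\lambda)-2(\lambda,\gamma)+(\gamma,\gamma)+2kn\le(\lambda,\lambda)$, that is $n\le\frac{2(\lambda,\gamma)-(\gamma,\gamma)}{2k}$, which is precisely ~\eqref{prakash}.

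The step I expect to be delicate is not any single computation but the normalization bookkeeping: fixing the sign of the $d$-grading so that $\hat\mu$ acquires $+n\delta$ and the cross term $2kn$ enters with the correct sign, keeping track that the level appears as $k$ rather than $\kappa=k+g^*$ in the denominator, and invoking the unshifted inequality $(\hat\mu,\hat\mu)\le(\hat\lambda,\hat\lambda)$ rather than the more frequently cited $\rho$-shifted form $(\hat\mu+\hat\rho,\hat\mu+\hat\rho)\le(\hat\lambda+\hat\rho,\hat\lambda+\hat\rho)$. The integrability of $\mh_\lambda$, which is what makes the $\hat W$-symmetry of the weight multiplicities available, is built into its construction (in particular relation (3) in the definition of $\mh_\lambda$ recalled earlier), so nothing extra is needed on that point.
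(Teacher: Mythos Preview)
Your proof is correct and follows essentially the same route as the paper: both extend $\hat{\frg}$ by the derivation $d$, identify the affine weight of $X_1(b_1)\cdots X_L(b_L)|\lambda\rangle$ as $\lambda-\gamma+(\sum b_a)\delta+k\Lambda_0$, and then invoke the norm inequality $(\hat\mu,\hat\mu)\le(\hat\lambda,\hat\lambda)$ for weights of the integrable module $L(\hat\lambda)$ (the paper simply cites \cite{K}, Theorem~12.5(d), whereas you supply the standard Weyl-group argument for it). Your careful bookkeeping of the sign of the $d$-grading, the appearance of $k$ rather than $\kappa$, and the use of the unshifted inequality exactly matches the paper's treatment.
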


\begin{remark}
\begin{enumerate}
\item In  ~\eqref{prakash}, the level $k$ appears in the denominator (and not $\kappa=k+g^*$).
In the master functions that we consider, we have $\kappa$ as a fractional exponent. The ``small'' difference between $\kappa$ and $k$ can be deemed responsible in part for the extension theorem (Theorem ~\ref{Proust}). In fact, the difference $g^*$ is in a sense optimal (Lemma ~\ref{basiclemma}).
\item In ~\eqref{prakash1} and ~\eqref{prakash}, inner products  are computed in $\frh^*$.
\end{enumerate}
\end{remark}
\subsection{Proof of Proposition ~\ref{richardthomas}}
Proposition ~\ref{richardthomas} is a direct corollary of Theorem 12.5,  part (d) of ~\cite{K}. To adjust to the notation there, we will include a few brief remarks. Extend the Lie algebra $\hat{\frg}$ (as in ~\cite{K}) by
$$\hat{\frg}'=\ \hat{\frg}+\Bbb{C}d,\ \hat{\frh}'=\frh+\Bbb{C}c + \Bbb{C}{d}$$
with the commutations
$$[d, c] = 0, [d, X(n)] = nX(n)$$

Extend a form $\lambda\in \frh^*$ to $\lambda\in \hat{\frh}'^*$ by setting $\langle \lambda, \Bbb{C}c+\Bbb{C}d\rangle=0$ where $\langle\ ,\ \rangle$ is the standard pairing of a vector space and its dual.

We define the elements $\Lambda_0$ and
$\delta$ in the dual $\hat{\frh}'^*$ = $\frh^*+ \Bbb{C}\Lambda_0+\Bbb{C}\delta$
by $\langle \delta,d\rangle=\langle\Lambda_0,c\rangle= 1$, $\langle \delta, \frh +\Bbb{C}c\rangle=\langle \Lambda_0, \frh +\Bbb{C}d\rangle=0$
We extend the form $(\ , )$ to  $\hat{\frh}'^*$ by putting

$$(\frh^*,\Bbb{C}\Lambda_0+\Bbb{C}\delta)=(\delta,\delta)=(\Lambda_0,\Lambda_0)=0,\ (\delta,\Lambda_0)=1$$

Given a highest weight representation of $\hat{\frg}$ of level $k$, we extend it to a representation of $\hat{\frg}'$ by having $d$ act on the highest weight vector by $0$ (actually define $d$ as $-L_0 +\alpha Id$ for a suitable constant $\alpha$ where $L_0$ is obtained from the Sugawara tensor).

The highest weight of $V_{\lambda}$ corresponds to the weight
$\lambda+ k\Lambda_0\in \hat{\frh}'^*$, and  $X_1(b_1)\dots X_L(b_L)|\lambda\rangle$ corresponds to the weight $\lambda-\gamma + (\sum_{a=1}^L b_a)\delta + k\Lambda_0$.
Apply ~\cite{K}, Theorem 12.5,  part (d) to get the following inequality which implies inequality ~\eqref{prakash}:
\begin{equation}\label{prakash1}
(\lambda-\gamma,\lambda-\gamma)+ 2k\sum_{a=1}^L b_a \leq (\lambda,\lambda).
\end{equation}

\section{The map to a Gauss-Manin system}
\subsection{The KZ/WZW connection:}\label{beethoven}
Let us first recall the KZ connection on conformal blocks as expressed in ~\cite{TUY} (see Section 6.2 in ~\cite{Ueno}).
Consider the tautological family of genus $0$ pointed curves over the configuration space of $N$ distinct points on $\Bbb{A}^1$,  $\mathcal{C}=\Bbb{A}^N-\cup_{i\neq j}\{z=(z_1,\dots,z_N)\mid  z_i=z_j\}$. The pointed curve corresponding to $z\in \mathcal{C}$ will be denoted by $\mathfrak{X}(z)$.

Consider a (local) family  $\langle\Psi|= \langle\Psi|(z)\in
V^{\dagger}_{\vec{\lambda}}(\mathfrak{X}(z))$
which we may view as a $((V_{\lambda_1}\tensor\dots \tensor V_{\lambda_N})^*)^{\frg}$ valued function defined on
a (small) analytic open subset $U$ of $\mathcal{C}$. Such a family is flat if and only if,
for any (constant, i.e independent of $z_1,\dots,z_N$) element
$|\vec{\nu}\rangle=|{\nu}_1\rangle\tensor\dots\tensor
|{\nu}_N\rangle \in V_{\lambda_1}\tensor V_{\lambda_2}\tensor\dots\tensor V_{\lambda_N}$
and  for each $1\leq i\leq n$,
\begin{equation}\label{KZ}
\frac{d}{dz_i}\langle\Psi|\vec{v}\rangle=\frac{1}{\kappa}\sum_{j=1,j\neq i}^N \frac{\langle\Psi|\Omega_{ij}|\vec{\nu}\rangle}{z_i-z_j}
\end{equation}
where $$\Omega_{ij}=\sum_{a=1}^{\dim \frg} \rho_i(J^a)\rho_j(J^a)$$ for any orthonormal basis $\{J^a\}$ of $\frg$.

\subsection{The map to cohomology}
Now consider and fix a  map $\beta:[M]\to R$ (as before, where $R$ the set of simple roots),
with $\mu=\sum \lambda_i=\sum_{a=1}^M \beta(a)$ so that $M=\sum n_p$.

For $z\in \mathcal{C}$, let $Y_z$ be the  cover of
$$X_z=\{(t_1,\dots, t_M)\in \Bbb{A}^M: t_a\neq t_b, 1\leq a<b\leq M, t_a \neq z_i, i=1,\dots,N, a=1,\dots, M\}$$
given by $Y_z=\{(t_1,\dots, t_M,y)\mid y^{C\kappa} = P\}$,
where $P$ was defined earlier (see equation ~\eqref{madhuridixit}).
The spaces $Y_z$ organize into a smooth  family of affine varieties $\mathcal{Y}\to \mathcal{C}$.
\begin{definition}
Define a map $$S:V^{\dagger}_{\vec{\lambda}}(\mathfrak{X}(z))\to H^0(Y_z, \Omega^M_{Y_z}),$$ as follows:
The map $S$ takes $\langle\Psi|(z)\in
V^{\dagger}_{\vec{\lambda}}(\mathfrak{X}(z))$ to the differential $\mathcal{R}\Omega_{\beta}(\langle\Psi|)(z)$ (note that $y=\mathcal{R}$ on $Y_z$):
$$S(\langle\Psi|)=\mathcal{R}\Omega_{\beta}(\langle\Psi|(z)). $$
We recall that $\Omega_{\beta}(\langle\Psi|(z))$ was constructed in Section ~\ref{marcel},and the master function $\mathcal{R}$ was defined in Section ~\ref{cottard}.
\end{definition}

\begin{proposition}\label{ivorytower}
Compose $S$ with  the evident morphism $H^0(Y_z,\Omega^M_{Y_z})\to H^{M}(Y_z, \Bbb{C})$.
The resulting map
$$T:V^{\dagger}_{\vec{\lambda}}(\mathfrak{X}(z))\to H^{M}(Y_z, \Bbb{C})$$
is a flat map (that is, preserves connections) as $z$ varies (with the KZ/WZW connection on the left, and the Gauss-Manin connection on the right hand side).
\end{proposition}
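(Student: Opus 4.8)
The plan is to reduce the flatness of $T$ to the flatness of the composite $S':V^\dagger_{\vec\lambda}(\mathfrak X(z))\to H^M(Y_z,\Bbb C)$ factoring through the \emph{de Rham complex}, and then to a purely computational identity of differential forms, following the scheme of step (3) in Ramadas' program as outlined in the introduction. More precisely: since the map to cohomology factors through $H^0(Y_z,\Omega^M_{Y_z})$ and a top-degree form is automatically closed, it suffices to check that, for a local flat family $\langle\Psi|(z)$ of conformal blocks, the family of cohomology classes $[\mathcal R\,\Omega_\beta(\langle\Psi|(z))]\in H^M(Y_z,\Bbb C)$ is flat for the Gauss--Manin connection. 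Because Gauss--Manin differentiation is computed by differentiating a family of forms with respect to $z_i$ and then correcting by a Lie derivative along a vector field lifting $\partial/\partial z_i$ to $\mathcal Y$, flatness amounts to showing that $\partial_{z_i}\bigl(\mathcal R\,\Omega_\beta(\langle\Psi|(z))\bigr)$, after subtracting the Gauss--Manin correction term, is \emph{exact} on $Y_z$, i.e.\ equals $d$ of an $(M-1)$-form on $Y_z$.

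First I would differentiate the product $\mathcal R\,\Omega_\beta(\langle\Psi|)$ under $\partial/\partial z_i$. The factor $\mathcal R$ is explicit (Section~\ref{cottard}), so $\partial_{z_i}\mathcal R/\mathcal R$ is an explicit rational function in the $t_a$ and $z_j$; this produces the ``KZ-type'' denominators $1/(z_i-z_j)$ and $1/(t_a-z_i)$. The factor $\Omega_\beta(\langle\Psi|)$ depends on $z$ both through the marked curve $\mathfrak X(z)$ and through $\langle\Psi|(z)$; here the KZ equation~\eqref{KZ} (together with the description of $\langle\Psi|$ as a $\frg$-invariant in $(\tensor V_{\lambda_j})^*$ and the behaviour of correlation functions under moving a parabolic point, page~70 of~\cite{Ueno}) expresses $\partial_{z_i}\Omega_\beta(\langle\Psi|)$ in terms of correlation functions with a Casimir $\Omega_{ij}$ inserted, divided by $\kappa$. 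The key new input, as the author signals, is the interpretation of the correlation function $\Omega_\beta$ in terms of the Schechtman--Varchenko forms (equation~\eqref{interpretation}, using ideas from~\cite{ATY}): this rewrites $\Omega_\beta(\langle\Psi|)$ as a sum, over combinatorial types $\beta$, of rational top-forms whose $z$-dependence under $\partial_{z_i}$ is governed by exactly the same rational identities that Schechtman--Varchenko~\cite{SV} use to prove their forms integrate the KZ connection.

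The heart of the argument is then to match the two computations: the explicit $z$-derivative of the master function times $\Omega_\beta$ should equal the KZ-flow of $\langle\Psi|$ times $\mathcal R$, \emph{up to an exact form}. This is where I would invoke the Schechtman--Varchenko machinery directly: their results say precisely that $\mathcal R$ times (appropriate hypergeometric cycles of) their forms are flat sections of the Gauss--Manin connection, equivalently that $\partial_{z_i}(\mathcal R\cdot(\text{SV form}))$ minus the KZ correction is $d$(something). Having identified $\Omega_\beta(\langle\Psi|)$ with a $\langle\Psi|$-weighted combination of SV forms via~\eqref{interpretation}, and having checked that the ``$\langle\Psi|$-weights'' themselves evolve under $\partial_{z_i}$ exactly by the KZ equation~\eqref{KZ} (this is the compatibility that makes the two sides agree), flatness of $T$ follows. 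One must also check that the exact $(M-1)$-forms produced this way actually live on $Y_z$ (and not merely on a larger cover), which is automatic because $y=\mathcal R$ is a coordinate function on $Y_z$ and all the forms in play are polynomial in $y$, $y^{-1}$ and the $t_a$.

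The main obstacle I expect is precisely establishing and then exploiting the identity~\eqref{interpretation} between the representation-theoretic correlation function $\Omega_\beta(\langle\Psi|)$ and the combinatorially-defined Schechtman--Varchenko forms: one has to track carefully the normalisations of the root vectors $f_\delta$ (Definition~\ref{gold}), the structure constants appearing when $[f_{\beta(a)},f_{\beta(b)}]$ is a multiple of $f_{\beta(a)+\beta(b)}$, and the $\kappa$ versus $k$ bookkeeping, and show that with these the coefficient of each monomial $\prod 1/(t_a-z_j)\cdot\prod 1/(t_a-t_b)$ in $\Omega_\beta$ matches the corresponding SV weight applied to $\langle\Psi|$. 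Once that dictionary is in place, flatness is a formal consequence of~\eqref{KZ} and the known flatness of the SV construction, so the remaining steps are bookkeeping rather than substantive. A secondary point to be careful about is that $T$ lands in cohomology only after Theorem~\ref{tempsperdu}/\ref{Proust} guarantees the forms extend to $\overline{Y}_z$; but for the flatness statement as phrased here (Gauss--Manin on $H^M(Y_z,\Bbb C)$ of the open variety) this is not needed, and I would defer the compactification to the subsequent Hodge-theoretic step.
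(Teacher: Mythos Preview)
Your proposal is correct and follows essentially the same route as the paper. The paper reduces Proposition~\ref{ivorytower} to Proposition~\ref{member} (the exactness statement you describe), then proves Proposition~\ref{member} by establishing the identification $\eta_{-\kappa}(\langle\Psi|)=\mathcal{R}\Omega_\beta(\langle\Psi|)$ of equation~\eqref{interpretation} via Proposition~\ref{aty}, and finally citing Theorems~7.2.5 and~7.2.5$'$ of~\cite{SV} for the actual flatness---exactly the three stages you outline. Two minor remarks: (i) the paper does not separately differentiate $\mathcal{R}$ and $\Omega_\beta$ and then match terms as you propose, but simply invokes~\cite{SV} wholesale once~\eqref{interpretation} is in hand; and (ii) the identification~\eqref{interpretation} (your ``main obstacle'') is proved in the paper not by tracking structure constants directly but by an induction on $M$ comparing polar parts (Proposition~\ref{aty}), which sidesteps most of the normalisation bookkeeping you anticipate.
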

Proposition ~\ref{ivorytower} follows from
\begin{proposition}\label{member}
For every (local) section $$\langle\Psi|(z)\in
V^{\dagger}_{\vec{\lambda}}(\mathfrak{X}(z)),$$ there exist a relative algebraic $M-1$  form $\omega$  on the fibers $\mathcal{Y}\to \mathcal{C}$ (locally over $\mathcal{C}$), such that for $1\leq i\leq N$,
$$ \mathcal{R}\Omega_{\beta}\bigl(\nabla_{\frac{d}{dz_i}}(\langle\Psi|)\bigr)= \frac{d}{dz_i}\mathcal{R}\Omega_{\beta}(\langle\Psi|) + d(\omega)$$
where the differential operator $d$ is the relative $d$-operator for the map
 $\mathcal{Y}\to \mathcal{C}.$
\end{proposition}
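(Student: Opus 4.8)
The plan is to compute both sides of the asserted identity explicitly, using the description of the correlation function in terms of Schechtman--Varchenko rational forms together with the KZ equations \eqref{KZ}, and then to recognise the discrepancy as a relative total derivative. Recall that the Gauss--Manin connection on the class of a relative top form is computed by differentiating the coefficients of a chosen relative representative in the base directions, modulo relative exact forms; thus the statement is precisely what is needed to deduce Proposition \ref{ivorytower}, as it asserts that $\mathcal{R}\Omega_{\beta}\bigl(\nabla_{\frac{d}{dz_i}}\langle\Psi|\bigr)$ and $\frac{d}{dz_i}\bigl(\mathcal{R}\Omega_{\beta}(\langle\Psi|)\bigr)$ represent the same class in the top relative de Rham cohomology of $\mathcal{Y}\to\mathcal{C}$.

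First I would use the gauge-condition simplification of Section \ref{eroica}, iterated to completion, together with the interpretation \eqref{interpretation} (which rests on the ideas of \cite{ATY}), to write
$$\Omega_{\beta}(\langle\Psi|) \;=\; \sum_{I} c_I\,\Phi_I(\vec{t},\vec{z})\,\langle\Psi|w_I\rangle\, d\vec{t},$$
a finite sum in which each $\Phi_I$ is a rational function with at most simple poles along $D$ (a form of Schechtman--Varchenko type), each $c_I$ is a constant built from the structure constants of $\frg$ in the chosen root vectors $f_{\delta}$, and each $w_I$ is a fixed zero-weight vector of $V_{\lambda_1}\tensor\dots\tensor V_{\lambda_N}$ obtained by applying lowering operators to $|\vec{\lambda}\rangle$. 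The essential point is that the $w_I$ are independent of $z$. Consequently, when one differentiates $\mathcal{R}\Omega_{\beta}(\langle\Psi|)$ in $z_i$ and separately computes $\mathcal{R}\Omega_{\beta}\bigl(\nabla_{\frac{d}{dz_i}}\langle\Psi|\bigr)$ using \eqref{KZ}, the terms in which the derivative falls on the matrix coefficients $\langle\Psi|w_I\rangle$ cancel, and the identity to be proved takes the reduced form
$$\mathcal{R}\,\Omega_{\beta}\bigl(\nabla_{\frac{d}{dz_i}}\langle\Psi|\bigr) - \frac{d}{dz_i}\bigl(\mathcal{R}\,\Omega_{\beta}(\langle\Psi|)\bigr) = -\sum_{I} c_I\Bigl( \tfrac{1}{\kappa}\sum_{j\neq i}\tfrac{\mathcal{R}\Phi_I\,\langle\Psi|\Omega_{ij}w_I\rangle}{z_i-z_j} + \tfrac{d}{dz_i}(\mathcal{R}\Phi_I)\,\langle\Psi|w_I\rangle\Bigr) d\vec{t}.$$

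It then remains to exhibit the relative $(M-1)$-form $\omega$ on $\mathcal{Y}\to\mathcal{C}$ whose relative differential is the right-hand side, and this is where the results of Schechtman--Varchenko \cite{SV} come in. Expanding $\Omega_{ij}w_I$ (the Casimir acting in the $i$-th and $j$-th slots) again as a combination of zero-weight vectors of the same kind, and writing out $\frac{d}{dz_i}\log\mathcal{R}$ — whose terms are $-\tfrac{(\lambda_i,\lambda_j)}{\kappa(z_i-z_j)}$ and $\tfrac{(\lambda_i,\beta(a))}{\kappa(t_a-z_i)}$, so that the exponents involve exactly $\kappa=k+g^*$ — the right-hand side becomes an explicit combination of $\mathcal{R}$ times rational $M$-forms in the $t$'s. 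One then integrates by parts in the $t$-variables one at a time: the boundary contributions produced at $t_a=z_j$, at $t_a=t_b$, and at $t_a=\infty$ are exactly the $\rho_j$-, bracket-, and ``regular at infinity'' contributions already recorded for the correlation function in Section \ref{eroica}, and matching them term by term identifies the right-hand side with $d(\omega)$, where $\omega$ is assembled from the Schechtman--Varchenko logarithmic forms in $M-1$ of the $t$'s. Since $y=\mathcal{R}$ on $Y_z$, both $\mathcal{R}\Omega_{\beta}(\langle\Psi|)$ and $\omega$ are genuinely algebraic on the fibres. In practice one may either quote the pertinent lemmas of \cite{SV} directly through the dictionary \eqref{interpretation}, or reprove them in this form by the integration-by-parts argument just sketched.

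The main obstacle I anticipate is the bookkeeping in these last steps: verifying that the combinatorial expansion of $\Omega_{ij}w_I$ into lowering-operator words matches, term by term and with the correct structure constants, the rational-form side of \eqref{interpretation}, and that the gauge-condition simplification — which is not canonical — can be organised so that the integration by parts closes up exactly (in particular, so that pole orders remain at most one along the diagonals and no spurious boundary terms survive). All of the genuinely new input is concentrated in the identification \eqref{interpretation}; granted that, the statement is a translation of the Schechtman--Varchenko calculation into the language of conformal blocks, exactly as in the corresponding step of Ramadas' argument \cite{TRR}.
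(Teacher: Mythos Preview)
Your proposal is correct and follows essentially the same route as the paper: the identification \eqref{interpretation} of $\mathcal{R}\Omega_{\beta}(\langle\Psi|)$ with the Schechtman--Varchenko form $\eta_{-\kappa}(\langle\Psi|)$ (via Proposition~\ref{aty}) reduces the statement to the flatness theorems 7.2.5 and 7.2.5$'$ of \cite{SV}, which the paper simply quotes. Your sketch of the underlying integration-by-parts mechanism is a faithful unpacking of what those theorems contain, but the paper does not reprove them; your worry about the non-canonicity of the gauge simplification is also resolved by the canonical formula of Proposition~\ref{aty}.
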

\subsection{Proof of Proposition ~\ref{member}}
The key point is that the form $\mathcal{R}\Omega_{\beta}(\langle\Psi|)$ coincides in a suitable sense with a map in the work ~\cite{SV} for which a key flatness statement is proved there. Once this connection is made, the proposition follows immediately from results in ~\cite{SV}.

Let\footnote{We do this to connect with the notation of ~\cite{SV}.} $\lambda=0$. Following the notation of ~\cite{SV} (where any complex value for $\kappa$ is permitted), the element $\langle\Psi|(z)$ produces an element in $(\mathcal{M}^*)_{\lambda}$ ($\mm$ is a tensor product of Verma modules, which surjects upon the tensor product $\tensor_{i=1}^N V_{\lambda_i}$)

A map is defined in ~\cite{SV}:
$$\eta_{\kappa}=\mathcal{M}_{\lambda}^*\to \Omega^{N}(X_z,\mathcal{L}_{\lambda,\kappa}),$$
where $\mathcal{L}_{\lambda,\kappa}$ is a rank one local system. We may take $\mathcal{L}_{\lambda,\kappa}$ to be the trivial vector bundle with the connection $d-d(\log \mathcal{R}$). Locally, in the complex analytic topology we have a flat map
$\mathcal{L}_{\lambda,\kappa}\to (\mathcal{O},\nabla=d)$ which takes  $1$ to $\mathcal{R}^{-1}$.

It is immediate from Proposition ~\ref{aty} and the definitions of the maps $\eta_{\kappa}$ that (with our value for $\kappa=k+g^*$)
\begin{equation}\label{interpretation}
\eta_{-\kappa}(\langle\Psi|)=\mathcal{R}\Omega_{\beta}(\langle\Psi|).
\end{equation}
\begin{remark}\label{pbs}
Write $\langle\Psi|$ in terms of  the basis $\{\delta(\gamma,\epsilon)\}$ in ~\cite{SV}:
 $$\langle\Psi|=\sum_{\gamma,\epsilon}c_{\gamma,\epsilon}\delta(\gamma,\epsilon)$$
where $c_{\gamma,\epsilon}=\langle\Psi|(f(\gamma,\epsilon))$. The formula given for $\eta(\delta(\gamma,\epsilon))$ should be compared with the coefficients in Proposition ~\ref{aty}.
\end{remark}
The flatness statement now follows from Theorems 7.2.5 and 7.2.5' of ~\cite{SV} (compare with Equation (B4) and (B.5) in ~\cite{ATY}). Note that the KZ equation in ~\cite{ATY} differs by a sign from ours.

\subsection{A formula for $\Omega$}\label{quartet}
Suppose $M=\sum n_p$, $\beta:[M]\to R$, $\langle\Psi|\in V^{\dagger}_{\vec{\lambda}}(\mathfrak{X})$ and  $\Omega=\Omega_{\beta}(\langle\Psi|)$. The following proposition gives a formula for $\Omega$
 (see ~\cite{ATY}, Proposition 3.2 for a similar statement, also see (B.4) and (B.5) in ~\cite{ATY}).
\begin{proposition}\label{aty}
$\Omega= \langle\Psi|w(t,z)\rangle\ dt_1\dots dt_M$, where $|w(t,z)\rangle\in V_{\lambda_1}\tensor\dots\tensor V_{\lambda_N}$ is given by the formula
\begin{equation}\label{madhav}
|w(t,z)\rangle=\sum_{part} \prod_{i=1}^N \langle\langle \prod_{a\in I_i} f_{\beta(a)}(t_a) |\lambda_i \rangle\rangle
\end{equation}
with
$$\langle\langle f_{\gamma_1}(u_1) f_{\gamma_2}(u_2)\dots f_{\gamma_q}(u_q) |\lambda_i\rangle\rangle=\sum_{perm}\frac{1}{(u_1-u_2)(u_2-u_3)\dots (u_q-z_i)}(f_{\gamma_1}\dots f_{\gamma_q} |\nu_i\rangle)\in V_{\lambda_i}$$
and where $\sum_{part}$ stands for the summation over all partitions of $I=\{1,\dots,M\}$ into $N$ disjoint parts
$I=I_1\cup I_2\cup\dots\cup I_N$ and $\sum_{perm}$ the summation over all permutations of the elements of $\{1,\dots,q\}$.
 \end{proposition}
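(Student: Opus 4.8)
The plan is to pin down $\Omega$ by its pole structure together with a residue recursion, and then check that the explicit right-hand side of~\eqref{madhav} satisfies the same recursion. By the analysis in Section~\ref{eroica} (recalled before Lemma~\ref{simpsons}), for each fixed $a$ the form $\Omega$, viewed as a one-form in $t_a$ with coefficients that are forms in the remaining $t$'s and functions of $z$, has at worst \emph{simple} poles along $t_a=z_i$ and along $t_a=t_b$ ($b\neq a$) — the absence of a double pole along $t_a=t_b$ being exactly the vanishing $(f_\delta,f_{\delta'})=0$ for positive roots — and it is regular at $t_a=\infty$. A one-form on $\pone$ with only simple poles at prescribed finite points and regular at infinity is determined by its residues, so it suffices to match residues. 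Two such residue formulas are already in the excerpt and I would reuse them verbatim: $\Res_{t_1=z_i}\Omega$ equals the correlation function with the current $f_{\beta(1)}(t_1)$ deleted and $|\lambda_i\rangle$ replaced by $f_{\beta(1)}|\lambda_i\rangle$ (this is the computation in the proof of Lemma~\ref{simpsons}), while $\Res_{t_1=t_b}\Omega$ equals the correlation function with $f_{\beta(1)}(t_1)$ deleted, $\beta(b)$ replaced by $\beta(1)+\beta(b)$, and $f_{\beta(b)}$ replaced by $[f_{\beta(1)},f_{\beta(b)}]$ (from the simplification rule of Section~\ref{eroica}, using that there is no order-two pole).

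I would then induct on $M$. The base case $M=0$ is the tautology $\Omega=\langle\Psi|\vec{\lambda}\rangle$. For the inductive step one must run the statement with $\beta$ allowed to take arbitrary positive (not merely simple) root values and with the $|\lambda_i\rangle$ replaced by arbitrary vectors of $V_{\lambda_i}$ obtained by applying lowering operators; the formula~\eqref{madhav} makes sense verbatim in this generality, with the understanding that the root vectors attached to nonsimple roots are the iterated brackets of the chosen $f_\delta$'s of Definition~\ref{gold} (the scalars relating $[f_{\beta(1)},f_{\beta(b)}]$ to $f_{\beta(1)+\beta(b)}$ must be tracked, but they are irrelevant to the final statement, where all $\beta(a)$ are simple). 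Assuming~\eqref{madhav} for $M-1$ currents, I would compute the residues of the proposed $|w(t,z)\rangle$ directly: in each partition $t_1$ lies in a unique block $I_i$, and inside the corresponding $\langle\langle\cdots\rangle\rangle$ it enters the chain denominators only through the one or two factors involving its immediate neighbours in the chosen ordering. Hence $\Res_{t_1=z_i}$ collects precisely the orderings in which $1$ is the last link of the $i$-th chain, reproducing the $M-1$ formula with $|\lambda_i\rangle\rightsquigarrow f_{\beta(1)}|\lambda_i\rangle$; and $\Res_{t_1=t_b}$ collects the orderings in which $1$ and $b$ are adjacent in a common chain, where for each ordering of the spliced $(M-1)$-chain the two relative positions of $1$ contribute $f_{\beta(1)}f_{\beta(b)}$ and $-f_{\beta(b)}f_{\beta(1)}$ with matching denominators, combining into $[f_{\beta(1)},f_{\beta(b)}]$ — reproducing the $M-1$ formula with $\beta(b)\rightsquigarrow\beta(1)+\beta(b)$. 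Together with the sign convention~\eqref{notatio} for reordering $dt_1\dots dt_M$, this closes the induction; regularity of the right-hand side at $t_1=\infty$ (needed to invoke uniqueness) follows from the $\frg$-invariance of $\langle\Psi|$, exactly as in the cancellation of the $1/t_1$ term in the $M=1$ case. As a cross-check I would also identify~\eqref{madhav} with the explicit solution of the genus-zero Ward identities written down in~\cite{ATY} (Proposition~3.2, and (B.4)--(B.5) there), from which the statement can alternatively be quoted.

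The step I expect to be the genuine obstacle is the purely combinatorial verification that the right-hand side of~\eqref{madhav} has the asserted residues: organizing the double sum over partitions and over chain-orderings so that the $t_1=t_b$ residue assembles into a commutator, and keeping careful track both of the scalars in $[f_{\beta(1)},f_{\beta(b)}]=c\,f_{\beta(1)+\beta(b)}$ and of the signs from permuting the one-form factors $dt_a$. Once the recursion is matched in one variable, uniqueness (simple poles, matching residues, regularity at infinity) finishes the proof with no further analytic input.
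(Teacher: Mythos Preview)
Your approach is essentially the paper's own: induct on $M$ with the strengthened hypothesis (arbitrary positive roots $\beta(a)$, arbitrary vectors $|\nu_i\rangle\in V_{\lambda_i}$), and identify both sides as rational expressions in $t_1$ by matching residues at $t_1=z_i$ (which hits $|\nu_i\rangle$ with $f_{\beta(1)}$) and at $t_1=t_b$ (which produces the commutator $[f_{\beta(1)},f_{\beta(b)}]$). The paper even records the same combinatorial identity you isolate as the crux, writing it as $a(u)b(v)f_uf_v-a(v)b(u)f_vf_u=a(v)b(v)[f_u,f_v]+O(u-v)$.

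The one place you make life harder than necessary is the point at infinity. The paper strips off $d\vec t$ and compares the coefficient \emph{functions} $f_\Theta$; on the right-hand side each term visibly decays like $1/t_1$, so both functions vanish at $\infty$ and equality follows once the finite polar parts agree. You instead ask for regularity of the right-hand side as a \emph{form} at $t_1=\infty$, which requires the stronger $O(1/t_1^{2})$ decay and hence a genuine cancellation of the $1/t_1$ coefficient. For $M=1$ this is indeed the $\frg$-invariance of $\langle\Psi|$, but for $M>1$ the $1/t_1$ coefficient has contributions both from $t_1$ being last in its chain and from $t_1$ being adjacent to some $t_b$, and the cancellation is no longer ``exactly as in the $M=1$ case''. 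Either argue as the paper does (functions, not forms), or note that the difference of the two sides, having no finite poles and at worst a simple pole at $\infty$, must be a constant multiple of $dt_1$ --- impossible since $dt_1$ has a double pole at $\infty$.
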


\begin{proof}
We will not assume  $M=\sum n_p$ (hence $\beta(a)$ may be arbitrary positive roots), and proceed by induction. Fix vectors $|\nu_i\rangle\in V_{\lambda_i}$, not necessarily  highest weight vectors, for $i=1,\dots, N$. Let $|\vec{\nu}\rangle=|{\nu}_1\rangle\tensor\dots\tensor
|{\nu}_N\rangle$ and consider $\Theta=\langle\Psi| f_{\beta(1)}(t_1)\dots f_{\beta(M)}(t_M)|\vec{\nu}\rangle$. We will prove by induction on $M$ that
\begin{equation}\label{madhav'}
\Theta=\bigl(\sum_{part} \langle\Psi \mid\prod_{i=1}^N \langle\langle \prod_{a\in I_i} f_{\beta(a)}(t_a) |\nu_i \rangle\rangle\bigr) d\vec{t}
\end{equation}
where $d\vec{t}=dt_1\dots dt_M$.

The proof is by induction on $M$. If $M=1$, then the result is clear: start with
$\Omega=\langle{\Psi}|f_{\beta}(t)|\vec{\nu}\rangle$
Now use the function $\frac{1}{z-t}$ and the gauge condition to write
$$\Theta=\sum_{i=1}^N \frac{1}{t_1-z_i}\langle{\Psi}|\rho_i(f_{\beta})|\vec{\nu}\rangle dt_1$$

For $M>1$, let us write $\Theta=f_{\Theta}(t_1,\dots, t_M) d\vec{t}$. We want to show that $f_{\Theta}$ equals the right hand side of equation ~\eqref{madhav} divided by $d\vec{t}$ (we do this to get rid of the   non-commuting $dt_1,\dots, dt_M$). We will show that both sides of the desired equations are equal as functions of $t_1$. It is easy to see that both sides vanish at infinity. We need to show that they have equal polar parts at every finite point. Therefore, we need to analyze the behavior as
\begin{enumerate}
\item $t_1 $ approaches $z_i$: Let $i=1$ for simplicity. The polar part of $\Theta$ is $\frac{1}{t_1-z_1}f_{\tilde{\Theta}}$ corresponding to a correlation function with variables $t_2,\dots, t_M$ (same $\beta$'s) with $|\nu_1\rangle$ changed to
$f_{\beta(1)} |\nu_1\rangle$. On the right hand side we need to consider only terms which have a fraction $\frac{1}{t_1-z_1}$. A little thought convinces us that
the equality of the polar parts at $t_1=z_1$ follows from induction.
\item $t_1$ approaches $t_a$. In this case the polar part  of $f_{\Theta}$  is $\frac{1}{t_1-t_a}f_{\tilde{\Theta}}$ corresponding to a correlation function  with points $t_2,\dots, t_M$, with $f_{\beta(a)}$ replaced by $[f_{\beta(1)},f_{\beta(a)}]$ (a multiple of $f_{\beta(a)+\beta(1)}$ if $\beta(a)+\beta(1)$ is a root, zero otherwise) . On the other side we should be looking at terms which have a $t_1-t_a$ or $t_a-t_1$. First all partitions considered should have $t_1$ and $t_a$ in the same part. So we are looking at words which have $f_{\beta(1)}f_{\beta(a)}$ or $f_{\beta(a)}f_{\beta(1)}$ as sub words. We use the formula

    $$a(u)b(v) f_u f_v - a(v) b(u) f_v f_u=  a(v)b(v)[f_u,f_v]+ O(u-v). $$
\end{enumerate}
\end{proof}

\begin{remark}
The form (with notation as in Proposition ~\ref{aty}) $\mathcal {R} \langle\Psi|w(t,z)\rangle\ dt_1\dots dt_M$ (=$\mathcal{R}\Omega_{\beta}(\langle\Psi|)$) is called a
Schechtman-Varchenko form (these were introduced in ~\cite{SV}).
\end{remark}

\section{Unitarity}
The remaining arguments for unitarity are exactly as in ~\cite{TRR}. For completeness, we include these details. Let $\overline{Y_z}$ be a smooth compactification of $Y$, which varies algebraically with $z$. That is, using resolution of singularities,
find an embedding of $\mathcal{C}$-varieties $\mathcal{Y}\subseteq \overline{\mathcal{Y}}$ so that $\overline{\mathcal{Y}}$ is smooth and projective over $\mathcal{C}$ and $\overline{\mathcal{Y}}-{\mathcal{Y}}$ is a
(relative) divisor with normal crossings.
 \footnote{We may have to shrink $\mathcal{C}$, but a flat unitary structure over a non-empty Zariski open subset of $\mathcal{C}$ yields one over all of $\mathcal{C}$.}

We therefore have a flat map  $$T:V^{\dagger}_{\vec{\lambda}}(\mathfrak{X}(z))\to H^{M}(Y_z, \Bbb{C})$$
which (because of Theorem ~\ref{Proust}) factors through an injective map (see Lemma ~\ref{simpsons}) $$\overline{T}:V^{\dagger}_{\vec{\lambda}}(\mathfrak{X}(z))\to H^{0}(\overline{Y_z}, \Omega^{M})\subseteq H^M(\overline{Y_z}, \Bbb{C})$$
(The second inclusion follows from Hodge theory.)
It follows that
\begin{proposition}\label{consequence}
\begin{enumerate}
\item $T: V^{\dagger}_{\vec{\lambda}}(\mathfrak{X}(z))\to H^{M}(Y_z, \Bbb{C})$ is injective (and flat for connections by ~\cite{SV}, as stated in Proposition ~\ref{ivorytower}).
\item $\overline{T}:V^{\dagger}_{\vec{\lambda}}(\mathfrak{X}(z))\to  H^M(\overline{Y_z}, \Bbb{C})$ is flat for the connections (KZ on one side and Gauss-Manin on the other).
\end{enumerate}
\end{proposition}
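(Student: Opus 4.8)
The plan is to extract both assertions from what is already in place. The flatness of $T$ is Proposition~\ref{ivorytower}, and the injectivity of $\overline{T}$, hence of $T$, follows from Lemma~\ref{simpsons} together with the extension Theorem~\ref{tempsperdu}: if $\langle\Psi|\neq 0$ then Lemma~\ref{simpsons} (applicable since $M=\sum n_p$) shows $\Omega_\beta(\langle\Psi|)$ is a nonzero rational top form on $(\pone)^M$, so $S(\langle\Psi|)=\mathcal{R}\,\Omega_\beta(\langle\Psi|)$ is a nonzero algebraic $M$-form on $Y_z$ (here $\mathcal{R}=y$ is an invertible regular function), which by Theorem~\ref{tempsperdu} extends to a nonzero global holomorphic top form $\overline{S}(\langle\Psi|)\in H^0(\overline{Y_z},\Omega^M_{\overline{Y_z}})=F^MH^M(\overline{Y_z},\Bbb{C})$ on the smooth projective variety $\overline{Y_z}$. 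To see that its restriction $T(\langle\Psi|)$ to $Y_z$ is still nonzero I would use that the restriction map $r\colon H^M(\overline{Y_z},\Bbb{C})\to H^M(Y_z,\Bbb{C})$ is injective on holomorphic top forms: the inclusion of sheaves $\Omega^M_{\overline{Y_z}}\subseteq\Omega^M_{\overline{Y_z}}(\log D)$ embeds $H^0(\overline{Y_z},\Omega^M_{\overline{Y_z}})$ into $H^0(\overline{Y_z},\Omega^M_{\overline{Y_z}}(\log D))=F^MH^M(Y_z,\Bbb{C})\subseteq H^M(Y_z,\Bbb{C})$, and this composite is exactly $r$ on top forms. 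This proves part~(1). Note, for later use, that $\ker r$ is pure of weight $M$ and $F^M(\ker r)=\ker r\cap H^{M,0}(\overline{Y_z})=0$.

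For part~(2): after shrinking $\mathcal{C}$ to a dense Zariski open (harmless, as remarked just before the statement) $r$ has constant rank, so it is a morphism of local systems $\overline{\mathcal{H}}^M\to\mathcal{H}^M$ --- where $\overline{\mathcal{H}}^M$, resp.\ $\mathcal{H}^M$, is the Gauss--Manin bundle of $H^M$ of the fibres of $\overline{\mathcal{Y}}\to\mathcal{C}$, resp.\ $\mathcal{Y}\to\mathcal{C}$ --- hence horizontal for the Gauss--Manin connections, with $\ker r$ a sub-local-system. By Proposition~\ref{ivorytower} the image $\operatorname{im}(T)$ is a flat sub-bundle of $\mathcal{H}^M$, so $\mathcal{V}:=r^{-1}(\operatorname{im}T)$ is a flat sub-bundle of the polarizable weight-$M$ variation of Hodge structure $\overline{\mathcal{H}}^M$, hence underlies a polarizable sub-variation. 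I would then invoke the semisimplicity of the category of polarizable variations of Hodge structure to split $\mathcal{V}=\ker r\oplus\mathcal{V}'$ as variations, so that $r$ restricts to an isomorphism $r|_{\mathcal{V}'}\colon\mathcal{V}'\xrightarrow{\sim}\operatorname{im}(T)$ (surjective since $\operatorname{im}T=r(\operatorname{im}\overline{T})\subseteq\operatorname{im}r$, injective since $\ker(r|_{\mathcal{V}})=\ker r$). From $r\circ\overline{T}=T$ we get $\operatorname{im}(\overline{T})\subseteq\mathcal{V}$, and since $\operatorname{im}(\overline{T})$ consists of classes of global holomorphic top forms, $\operatorname{im}(\overline{T})\subseteq F^M\overline{\mathcal{H}}^M$; hence $\operatorname{im}(\overline{T})\subseteq\mathcal{V}\cap F^M\overline{\mathcal{H}}^M=F^M\mathcal{V}=F^M\mathcal{V}'\subseteq\mathcal{V}'$, using $F^M(\ker r)=0$. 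Therefore $\overline{T}=(r|_{\mathcal{V}'})^{-1}\circ T$ is a composite of horizontal maps, so $\overline{T}$ is flat.

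The only real difficulty is the flatness of $\overline{T}$. One cannot simply ``divide $T=r\circ\overline{T}$ by $r$'', as $r$ is far from injective; and the naive fix through Hodge bidegrees is blocked by Griffiths transversality, which only gives that the Gauss--Manin derivative of a family of holomorphic top forms lies in $F^{M-1}$, whereas $\ker r$ typically meets $F^{M-1}$. What rescues the argument is that the flatness of $\operatorname{im}(T)$ is \emph{already known} --- this is exactly where the Schechtman--Varchenko flatness statement~\cite{SV}, through Proposition~\ref{ivorytower}, is indispensable --- so that semisimplicity of polarizable variations of Hodge structure can then confine $\operatorname{im}(\overline{T})$ to a genuinely flat complement $\mathcal{V}'$ of $\ker r$. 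The remainder is standard mixed Hodge theory, and this last step runs exactly as in~\cite{TRR}.
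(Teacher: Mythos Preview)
Your argument is correct and follows the same route the paper sketches (Deligne's mixed Hodge theory for part (1), semisimplicity in pure Hodge theory for part (2)). One small imprecision: the assertion that the flat sub-bundle $\mathcal{V}=r^{-1}(\operatorname{im}T)$ ``underlies a polarizable sub-variation'' is not automatic---an arbitrary sub-local-system of a polarizable VHS need not be a sub-VHS, so splitting $\mathcal{V}$ ``as variations'' is not yet justified, and without a Hodge splitting the step $\mathcal{V}\cap F^M\overline{\mathcal{H}}^M\subseteq\mathcal{V}'$ does not follow from $F^M(\ker r)=0$ alone. The clean fix is to bypass $\mathcal{V}$: since $\ker r$ \emph{is} a sub-VHS of $\overline{\mathcal{H}}^M$ (kernel of a morphism of mixed Hodge structures), semisimplicity of the category of polarizable VHS yields a splitting $\overline{\mathcal{H}}^M=\ker r\oplus\mathcal{K}$ as variations; then $F^M\overline{\mathcal{H}}^M=F^M(\ker r)\oplus F^M\mathcal{K}=F^M\mathcal{K}\subseteq\mathcal{K}$, hence $\operatorname{im}(\overline{T})\subseteq\mathcal{K}$, and $\overline{T}=(r|_{\mathcal{K}})^{-1}\circ T$ exactly as you conclude.
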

The first part uses results of Deligne ~\cite{D}; and the second uses the first, and a semi-simplicity theorem in (pure) Hodge theory. The first part is a theorem of Varchenko for $\frg=\mathfrak{sl}_2$ (see ~\cite{V}, Theorem 14.6.4).

The Hodge form on  $H^M(\overline{Y_z}, \Bbb{C})$ is given on de Rham classes  by $|\omega|^2=(\sqrt{-1})^{M}\int_{\overline{Y_z}} \omega\wedge\bar{\omega}\ $. The restriction of
Hodge form to $H^{0}(\overline{Y_z}, \Omega^{M})\subset H^M(\overline{Y_z}, \Bbb{C})$ gives  a unitary metric.

It is known that the Hodge form  is induced by the cup product on cohomology, and is therefore preserved by the Gauss-Manin connection (note that this connection, in general may not preserve $H^{0}(\overline{Y_z}, \Omega^{M})$).  It then follows that we can induce the desired unitary metric on $V^{\dagger}_{\vec{\lambda}}(\mathfrak{X}(z))$ through the map $\overline{T}$. Theorem ~\ref{introtheorem} therefore holds.
\begin{remark} As in ~\cite{TRR}, the image of $\overline{T}$, lands inside a weight space (for a suitable character) of $H^{0}(\overline{Y_z}, \Omega^{M})$. It would be interesting to obtain a characterization
of the image of $\overline{T}$ (compare with Theorem 7 in ~\cite{looconfa} for the larger KZ system).
\end{remark}
\subsection{An explicit form of the metric}
Given a conformal block
$\langle\Psi|$, let $\pi(\langle\Psi|)=\mathcal{R}\Omega(\langle\Psi|)$ be the corresponding Schechtman-Varchenko form. Then, the unitary metric (upto a constant $C\kappa$) is given by the convergent  integral (as conjectured in ~\cite{FGK}):
$$|\langle\Psi\|^2=(\sqrt{-1})^{M}\int_{(\pone)^M}\pi(\langle\Psi|)\wedge\overline{\pi(\langle\Psi|)}$$
\section{Configuration spaces and moduli spaces} It is easy to see that  $\mathfrak{M}=\mathfrak{M}_{0,N}$  is a smooth affine variety by identifying $\mathfrak{M}$ with the configuration space of $N-3$ points on $\Bbb{A}^1$.
It is known that there is a bundle of conformal blocks $\mv$ on  $\mathfrak{M}$ which comes equipped with a flat connection $\nabla_{\mathfrak{M}}$ induced from the connection on conformal block bundles on configuration spaces (from Section ~\ref{beethoven}).
\begin{remark}
\begin{enumerate}
\item Define two unitary  metrics  on a complex vector space $V$ to be projectively equivalent if they   are positive real multiples of each other. A projective metric on $V$ is an equivalence class under this
 relation.
\item Fix an actual unitary metric on $V$. There is a bijection between elements of   $\operatorname{PGL}(V)$ that carry this metric on $V$ to a metric projectively equivalent to it, and elements of $\operatorname{PU}(V)$.
Here $\operatorname{PGL}(V)=\operatorname{GL}(V)/\Bbb{C}^*$ and $\operatorname{PU}(V)=\operatorname{U}(V)/\operatorname{U}(1)$, and $\operatorname{U}(1)=\{z\in \Bbb{C}\mid z\bar{z}=1\}$.
\end{enumerate}
\end{remark}

Consider the configuration space $\mathcal{C}$  as in Section ~\ref{beethoven}. There is a natural map $\pi:\mathcal{C}\to \mathfrak{M}$. The bundle of conformal blocks over $\mathcal{C}$ considered in this paper is canonically equal to $\pi^* \mv$. We have constructed a unitary metric $(\ ,\ )$ on $\pi^*\mv$ which is preserved by the connection $\nabla_{\mathcal{C}}$ (as in Section ~\ref{beethoven}).

The connection $\nabla_{\mathcal{C}}$ on $\pi^*\mv$  is equal to $\pi^*\nabla_{\mathfrak{M}}$ only as a projective connection\footnote{Recall that the theory in ~\cite{TUY}  produces a well defined flat projective connection.}. Therefore, locally on $\mathcal{C}$, $\nabla_{\mathcal{C}}-\pi^*\nabla_{\mathfrak{M}}
=df\tensor \Id_{\pi^*\mv}$ for a local function $f$ on $\mc$. If $v$ is a local section of $\mv$ on $\mathcal{M}$ and $X$ a tangent vector field on the fibers of $\mathcal{C}\to \mathcal{M}$, we find
 $\nabla_{\mc,X} \pi^*v =(\pi^*\nabla_{\mathfrak{M}})_X v +X(f) v= X(f)v$ and so,
$$X(\pi^*v,\pi^*v)= (\nabla_{\mc,X} \pi^*v,\pi^*v) + (\pi^*v,\nabla_{\mc,X} \pi^*v) = (X(f)+\overline{X(f)}) (\pi^*v,\pi^*v)$$

The above argument shows that for $x,y\in \mc$ with $\pi(x)=\pi(y)=p$, the two metrics on  $\pi^*\mv_x= \pi^*\mv_y=\mv_{p}$ are projectively equivalent. Hence the projective monodromy group of $(\mv,\nabla)$ on $\mathfrak{M}$ about a base point $p\in\mathfrak{M}$ is contained in $\operatorname{PU}(\mv_p)$ where the metric on $\mv_p$ is any element in the projective equivalence class constructed above on $\mv_p$.

\bibliographystyle{plain}
\def\noopsort#1{}

\end{document}